\def\BibTeX{{\rm B\kern-.05em{\sc i\kern-.025em b}\kern-.08em
    T\kern-.1667em\lower.7ex\hbox{E}\kern-.125emX}}
\newtheorem{thm}{Theorem}[section]
\newtheorem{lem}[thm]{Lemma}
\theoremstyle{definition}
\theoremstyle{remark}
\newtheorem{rem}{Remark}[section]
\newtheorem{exmp}{Example}[section]
\numberwithin{equation}{section}
    \newcommand{\floor}[1]{\lfloor#1\rfloor}
    \newcommand{\EE}{\mathbb{E}}
    \renewcommand{\Pr}{\operatorname{P}}
    \newcommand{\dto}{\xrightarrow{d}}
    \newcommand{\vto}{\xrightarrow{v}}
    \newcommand{\fidi}{\xrightarrow{\text{fidi}}}
    \newcommand{\toi}{\to\infty}
    \newcommand{\eind}{\stackrel{d}{=}}
    \newcommand{\rmd}{\mathrm{d}}
\newcommand{\be}{\begin{equation}}
    \newcommand{\ee}{\end{equation}}
\begin{document}

\title[A functional limit theorem for moving averages] 
{A functional limit theorem for moving averages with weakly dependent heavy-tailed innovations}

%
\author{Danijel Krizmani\'{c}}

\address{Danijel Krizmani\'{c}\\ Department of Mathematics\\
        University of Rijeka\\
        Radmile Matej\v{c}i\'{c} 2, 51000 Rijeka\\
        Croatia}
\email{dkrizmanic@math.uniri.hr}



\subjclass[2010]{Primary 60F17; Secondary 60G51}
\keywords{Functional limit theorem, Regular variation, $M_{2}$ topology, Moving average process}


\begin{abstract}
Recently a functional limit theorem for sums of moving averages with random coefficients and i.i.d.~heavy tailed innovations has been obtained under the assumption that all partial sums of the series of coefficients are a.s.~bounded between zero and the sum of the series. The convergence takes place in the space $D[0,1]$ of c\`{a}dl\`{a}g functions with the Skorohod $M_{2}$ topology. In this article we extend this result to the case when the innovations are weakly dependent in the sense of strong mixing and local dependence condition $D'$.
\end{abstract}

\maketitle

\section{Introduction}
\label{intro}

Let $(Z_{i})_{i \in \mathbb{Z}}$ be a strictly stationary sequence of regularly varying random variables with index of regular variation $\alpha \in (0,2)$.
This means that
\begin{equation}\label{e:regvar}
 \Pr(|Z_{i}| > x) = x^{-\alpha} L(x), \qquad x>0,
\end{equation}
where $L$ is a slowly varying function at $\infty$. Let $(a_{n})$ be a sequence of positive real numbers such that
\be\label{eq:niz}
n \Pr (|Z_{1}|>a_{n}) \to 1,
\ee
as $n \to \infty$. Then regular
variation of $Z_{i}$ can be expressed in terms of
vague convergence of measures on $\EE = \overline{\mathbb{R}} \setminus \{0\}$:
\begin{equation}
  \label{eq:onedimregvar}
  n \Pr( a_n^{-1} Z_i \in \cdot \, ) \vto \mu( \, \cdot \,) \qquad \textrm{as} \ n \to \infty,
\end{equation}
where $\mu$ is a measure on $\EE$  given by
\begin{equation}
\label{eq:mu}
  \mu(\rmd x) = \bigl( p \, 1_{(0, \infty)}(x) + r \, 1_{(-\infty, 0)}(x) \bigr) \, \alpha |x|^{-\alpha-1} \, \rmd x,
\end{equation}
with
\be\label{eq:pq}
p =   \lim_{x \to \infty} \frac{\Pr(Z_i > x)}{\Pr(|Z_i| > x)} \qquad \textrm{and} \qquad
  r =   \lim_{x \to \infty} \frac{\Pr(Z_i \leq -x)}{\Pr(|Z_i| > x)}.
\ee
We study
the moving average process with random coefficients, defined by
\begin{equation}\label{e:MArandom}
X_{i} = \sum_{j=0}^{\infty}C_{j}Z_{i-j}, \qquad i \in \mathbb{Z},
\end{equation}
where
$(C_{i})_{i \geq 0 }$ is a sequence of random variables independent of $(Z_{i})$ such that the above series is a.s.~convergent. One sufficient condition for that, which is commonly used in the literature is
\begin{equation}\label{e:momcond}
\sum_{j=0}^{\infty} \mathrm{E} |C_{j}|^{\delta} < \infty \qquad \textrm{for some}  \ \delta < \alpha,\,0 < \delta \leq 1.
\end{equation}
The moment condition (\ref{e:momcond}), stationarity of the sequence $(Z_{i})$ and $\mathrm{E}|Z_{1}|^{\beta} < \infty$ for every $\beta \in (0,\alpha)$ (which follows from the regular variation property and Karamata's theorem) imply the a.s.~convergence of the series in (\ref{e:MArandom}), since
$$ \mathrm{E}|X_{i}|^{\delta} \leq \sum_{j=0}^{\infty} \mathrm{E}|C_{j}|^{\delta} \mathrm{E}|Z_{i-j}|^{\delta} = \mathrm{E}|Z_{1}|^{\delta} \sum_{j=0}^{\infty}\mathrm{E}|C_{j}|^{\delta} < \infty.$$
Another condition that assures the a.s.~convergence of the series in the definition of linear processes with
$$\begin{array}{rl}
 \nonumber \mathrm{E}(Z_{1})=0, & \quad \textrm{if} \ \alpha \in (1,2),\\[0.2em]
 \nonumber Z_{1} \ \textrm{is symmetric}, & \quad \textrm{if} \ \alpha =1,
\end{array}$$
 and a.s.~bounded coefficients
 can be deduced from the results in Astrauskas~\cite{At83} for linear processes with deterministic coefficients:
$$ \sum_{j =0}^{\infty}c_{j}^{\alpha} L(c_{j}^{-1}) < \infty,$$
where $(c_{j})$ is a sequence of positive real numbers such that $|C_{j}| \leq c_{j}$ a.s.~for all $j$ (c.f.~ Balan et al.~\cite{BJL16}).

In the case when the $Z_{i}$'s are independent, under some usual regularity conditions and the assumption that all partial sums of the series $C= \sum_{i=0}^{\infty}C_{i}$ are a.s.~bounded between zero and the sum of the series, i.e.
\be\label{eq:InfiniteMAcond}
0 \le \sum_{i=0}^{s}C_{i} \Bigg/ \sum_{i=0}^{\infty}C_{i} \le 1 \ \ \textrm{a.s.} \qquad \textrm{for every} \ s=0, 1, 2 \ldots,
\ee
a functional limit theorem for the corresponding partial sum stochastic process
\be\label{eq:defVn1}
V_{n}(t) = \frac{1}{a_{n}}  \sum_{i=1}^{\floor {nt}}X_{i}, \qquad t \in [0,1],
\ee
in the space $D[0,1]$ with the Skorohod $M_{2}$ topology was recently derived in Krizmani\'{c}~\cite{Kr19}. More precisely,
\be\label{eq:fconvVn}
 V_{n}(\,\cdot\,) \dto \widetilde{C} V(\,\cdot\,) \qquad \textrm{as} \ n \to \infty,
\ee
in $D[0,1]$ endowed with the $M_{2}$ topology,
where $V$ is an $\alpha$--stable L\'{e}vy process with characteristic triple $(0, \mu, b)$, with $\mu$ as in $(\ref{eq:mu})$,
$$ b = \left\{ \begin{array}{cc}
                                   0, & \quad \alpha = 1,\\[0.4em]
                                   (p-r)\frac{\alpha}{1-\alpha}, & \quad \alpha \in (0,1) \cup (1,2),
                                 \end{array}\right.$$
    $\widetilde{C}$ is a random variable, independent of $V$, such that $\widetilde{C} \eind C$, and $D[0,1]$ is the space of real--valued right continuous functions on $[0,1]$ with left limits.
 When the sequence of coefficients $(C_{j})$ is deterministic, relation (\ref{eq:fconvVn}) reduces to
 $$ V_{n}(\,\cdot\,) \dto C V(\,\cdot\,) \qquad \textrm{as} \ n \to \infty$$
 (see Proposition 3.2 in Krizmani\'{c}~\cite{Kr19}). This functional convergence, as shown by Avram and Taqqu~\cite{AvTa92}, can not be strengthened to the Skorohod $J_{1}$ convergence on $D[0,1]$, but if all coefficients are nonnegative, then it holds in the $M_{1}$ topology.

 More precisely, let $X_{i} = \sum_{j \in \mathbb{Z}}c_{j}Z_{i-j}$ be a linear process with independent, regularly varying innovations $Z_{i}$ with index of regular variation $\alpha \in (0,2)$, and deterministic coefficients that are summable:
 $ \sum_{j \in \mathbb{Z}}|c_{j}| < \infty$.
 Assume also $\mathrm{E}(Z_{1})=0$ if $\alpha \in (1,2)$, and $Z_{1}$ is symmetric if $\alpha =1$.
 Then it is known that
 \begin{equation}\label{e:fidic}
   V_{n}(\,\cdot\,) \fidi \bigg( \sum_{j \in \mathbb{Z}}c_{j} \bigg) V(\,\cdot\,) \qquad \textrm{as} \ n \to \infty,
 \end{equation}
 where $V$ is an $\alpha$--stable L\'{e}vy process and ``$\fidi$" denotes convergence of finite-dimensional
distributions (see Astrauskas~\cite{At83}, Theorem 1i, and Balan et al.~\cite{BJL16}, Theorem 2.1). Avram and Taqqu~\cite{AvTa92} in their Theorem 1 showed that in the case of finite-order moving average with at least two non-zero coefficients the convergence in (\ref{e:fidic}) does not hold in the Skorohod $J_{1}$ topology (when only one coefficient is non-zero, the $J_{1}$ convergence holds by Skorohod~\cite{Sk57}). They also showed that the Skorohod $M_{1}$ convergence holds if one imposes some additional assumptions: if all coefficients are nonnegative, then in the case of finite-order moving averages the convergence in (\ref{e:fidic}) can be strengthened to the $M_{1}$ convergence on $D[0,1]$. The same holds for infinite-order moving averages when $\alpha \leq 1$, while when $\alpha >1$ under additional technical assumptions the $M_{1}$ convergence holds:
$$ \sum_{j \in \mathbb{Z}}|c_{j}|^{\nu} < \infty \qquad \textrm{for some} \ 0 < \nu < \alpha, \ \nu \leq 1,$$
and
$$ \lim_{n \to \infty} (\ln n)^{1+\alpha+\eta} \bigg( \sum_{|j| >n}|c_{j}|^{\nu} \bigg) \bigg( \sum_{|j|>n}|c_{j}| \bigg)^{\alpha-\eta-\nu}=0$$
for some $ 0 < \eta \leq \alpha-1$. This last condition holds if $\nu<1$ and $(c_{j})_{j \geq 0}$ and $(c_{j})_{j <0}$ are monotone sequences (see Avram and Taqqu~\cite{AvTa92}, Theorem 2', Proposition 1 and Theorem 2), but the results by Louhichi and Rio~\cite{LR11} show that it can be dropped. With less restrictive assumptions on the coefficients Basrak and Krizmani\'{c}~\cite{BaKr} obtained convergence in a weaker topology: if $c_{j}=0$ for $j<0$, $c_{0}, c_{1}, \ldots \in \mathbb{R}$ and for every $s=0,1,2,\ldots$
$$ 0 \leq \sum_{j=0}^{s}c_{j} \bigg/ \sum_{j=0}^{\infty}c_{j} \leq 1$$
(i.e.~all partial sums of the series of coefficients are bounded between zero and the sum of the series), then (\ref{e:fidic}) holds in the $M_{2}$ topology, the weakest of the four Skorohod topologies. Recently, Balan et al.~\cite{BJL16} obtained functional convergence in the $S$ topology under every of the the following three sets of conditions: (i) $\alpha \in (1,2)$ and $\sum_{j\in \mathbb{Z}}|c_{j}|< \infty$; (ii) $\alpha \leq 1$, $\sum_{j\in \mathbb{Z}}|c_{j}|^{\alpha}< \infty$ and the function $L$ from (\ref{e:regvar}) satisfies $L(\lambda x) / L(x) \leq M$ for $\lambda >1$ and $x \geq x_{0}$ (for some constants $M$, $x_{0}$); (iii) $\alpha < 1$, $\sum_{j\in \mathbb{Z}}|c_{j}|^{\alpha}< \infty$ and there exists a constant $0 < \gamma < \alpha$ such that
$$ \frac{\max_{j+1 \leq k \leq j+n}|c_{k}|^{\frac{(1-\alpha)(\alpha-\gamma)}{1-\alpha+\gamma}}}{\sum_{k=j+1}^{j+n}|c_{k}|^{\alpha}} \leq K_{+} < \infty, \quad j \geq 0,$$
$$ \frac{\max_{j-n \leq k \leq j-1}|c_{k}|^{\frac{(1-\alpha)(\alpha-\gamma)}{1-\alpha+\gamma}}}{\sum_{k=j-n}^{j-1}|c_{k}|^{\alpha}} \leq K_{-} < \infty, \quad j \leq 0$$
(with the convention that $0/0 \equiv 1$). The $S$ topology, introduced in Jakubowski~\cite{Ja97}, is a sequential and non-metric topology, weaker than the $M_{1}$ topology.

 In this paper we aim to extend the functional convergence in (\ref{eq:fconvVn}) to the case when the innovations $Z_{i}$ are weakly dependent, i.e.~$(Z_{i})$ is a strongly mixing sequence which satisfies the local dependence condition $D'$ as is given in Davis~\cite{Da83}:
 \begin{equation}\label{e:D'cond}
 \lim_{k \to \infty} \limsup_{n \to \infty}~n \sum_{i=1}^{\lfloor n/k \rfloor} \Pr \bigg( \frac{|Z_{0}|}{a_{n}} > x, \frac{|Z_{i}|}{a_{n}} >x \bigg) = 0 \qquad \textrm{for all} \ x >0.
 \end{equation}
 For instance, a process which is an instantaneous function of a stationary Gaussian process with covariance function $r_{n}$ behaving like $r_{n} \log n \to 0$ as $n \to \infty$ satisfies Condition (1.11), see Davis~\cite{Da83}. Other examples of time series that satisfy Condition (1.11), related to stochastic volatility models and ARMAX processes, can be found in Davis and Mikosch~\cite{DaMi09} and Ferreira and Canto e Castro~\cite{FeCa08}. This condition, together with the strong mixing property, assures that, as in the i.i.d.~case, the extremes of the sequence $(Z_{i})$ are isolated. This corresponds to the situation when the extremal index $\theta$ of the sequence $(Z_{i})$, which can be interpreted as the reciprocal mean cluster size of large exceedances (c.f.~Hsing et al.~\cite{HHL88}), is equal to $1$. When $\theta < 1$ clustering of extreme values occurs, and in general condition (\ref{e:D'cond}) and the convergence in (\ref{eq:fconvVn}) fail to hold, see Example~\ref{ex:M2fails} below for an illustration. Recall here that a strictly stationary sequence of random variables $(\xi_{n})$ has extremal index $\theta$ if for every $\tau >0$ there exists a sequence of real numbers $(u_{n})$ such that
\begin{equation*}\label{eq:eindex}
 \lim_{n \to \infty} n \Pr ( \xi_{1} > u_{n}) \to \tau \qquad \textrm{and} \qquad \lim_{n \to \infty} \Pr \bigg( \max_{1 \leq i \leq n} \xi_{i} \leq u_{n} \bigg) \to e^{-\theta \tau}.
\end{equation*}
It holds that $\theta \in [0,1]$. Recall also that a sequence $(\xi_{n})$ is strongly mixing if $\alpha (n) \to 0$ as $n \to \infty$, where
$$\alpha (n) = \sup \{|\Pr (A \cap B) - \Pr(A) \Pr(B)| : A \in \mathcal{F}_{1}^{j}, B \in \mathcal{F}_{j+n}^{\infty}, j=1,2, \ldots \}$$
and $\mathcal{F}_{k}^{l} = \sigma( \{ \xi_{i} : k \leq i \leq l \} )$ for $1 \leq k \leq l \leq \infty$.
For some related results regarding functional convergence for linear type processes with short memory under the assumptions that the innovations are dependent and the coefficients deterministic, we refer to Tyran-Kami\'{n}ska~\cite{Ty10b}. For some related results on limit theory for moving averages with random coefficients we refer to Hult and Samorodnitsky~\cite{HuSa08} and Kulik~\cite{Ku06}.


We also impose the following standard regularity conditions on $Z_{1}$:
  \begin{eqnarray}\label{e:oceknula}
    \mathrm{E} Z_{1}=0, & & \textrm{if} \ \ \alpha \in (1,2),  \\
    Z_{1} \ \textrm{is symmetric}, & & \textrm{if} \ \ \alpha=1.\label{e:sim}
  \end{eqnarray}
Beside condition (\ref{e:momcond}) we will require some other moment conditions, which will be specified in Section~\ref{S:InfiniteMA}.
Further, in the case $\alpha \in [1,2)$ we will need to assume the following condition:
 \begin{equation}\label{e:vsvcond}
 \lim_{u \downarrow 0} \limsup_{n \to \infty}~\Pr \bigg[ \max_{1 \leq k \leq n} \bigg| \sum_{i=1}^{k}  \bigg( \frac{Z_{i}}{a_{n}} 1_{\{|Z_{i}|/a_{n} \leq u\}} - \mathrm{E} \bigg( \frac{Z_{i}}{a_{n}} 1_{\{ |Z_{i}|/a_{n} \leq u \}} \bigg) \bigg) \bigg| > \epsilon \bigg]=0
 \end{equation}
for all $\epsilon >0$. This condition holds if the sequence $(Z_{i})$ is $\rho$-mixing at a certain rate (see Lemma 4.8 in Tyran-Kami\'{n}ska~\cite{Ty10a}). In case $\alpha \in (0,1)$ it is a simple consequence of regular variation and Karamata's theorem. Similar conditions are used often in the related literature on the limit theory for partial sums, see~\cite{AvTa92, BKS, DuRe78, Ty10a}.

 The Skorohod $M_{2}$ topology on $D[0, 1]$ is defined using completed graphs and their parametric representations (see Section 12.11 in Whitt~\cite{Whitt02} for details). We will use the following characterization of the $M_{2}$ topology with the Hausdorff metric on the spaces of graphs: for $x_{1},x_{2} \in D[0,1]$, the $M_{2}$ distance between $x_{1}$ and $x_{2}$ is given by
$$ d_{M_{2}}(x_{1}, x_{2}) = \bigg(\sup_{a \in \Gamma_{x_{1}}} \inf_{b \in \Gamma_{x_{2}}} d(a,b) \bigg) \vee \bigg(\sup_{a \in \Gamma_{x_{2}}} \inf_{b \in \Gamma_{x_{1}}} d(a,b) \bigg),$$
where $\Gamma_{x}$ is the completed graph of $x \in D[0,1]$ defined by
 \[
  \Gamma_{x}
  = \{ (t,z) \in [0,1] \times \mathbb{R} : z= \lambda x(t-) + (1-\lambda)x(t) \ \text{for some}\ \lambda \in [0,1] \},
\]
where $x(t-)$ is the left limit of $x$ at $t$, $d$ is the metric on $\mathbb{R}^{2}$ defined by $d((x_{1},y_{1}),(x_{2},y_{2}))=|x_{1}-x_{2}| \vee |y_{1}-y_{2}|$ for $(x_{i},y_{i}) \in \mathbb{R}^{2},\,i=1,2$, and $a \vee b = \max\{a,b\}$.
The metric $d_{M_{2}}$ induces the $M_{2}$ topology, which is weaker than the more frequently used $M_{1}$ and $J_{1}$ topologies.

The paper is organized as follows. In Section~\ref{S:FiniteMA} we derive functional convergence of partial sum stochastic processes for finite order moving averages, and then in Section~\ref{S:InfiniteMA} we
extend this result to infinite order moving average processes.

\section{Finite order MA processes}
\label{S:FiniteMA}

Fix $q \in \mathbb{N}$ and let $C_{0}, C_{1}, \ldots , C_{q}$ be random variables satisfying
\be\label{eq:FiniteMAcond}
0 \le \sum_{i=0}^{s}C_{i} \Bigg/ \sum_{i=0}^{q}C_{i} \le 1 \ \ \textrm{a.s.} \qquad \textrm{for every} \ s=0, 1, \ldots, q.
\ee
Condition (\ref{eq:FiniteMAcond}) implies that $ C = \sum_{i=0}^{q}C_{i}$,
$ \sum_{i=0}^{s}C_{i}$ and $ \sum_{i=s}^{q}C_{i}$ are a.s.~of the same sign for every $ s=0,1,\ldots,q$. Note that condition (\ref{eq:FiniteMAcond}) is satisfied if the $C_{j}$'s are all nonnegative or all nonpositive.

Let $(X_{t})$ be a moving average process defined by
$$ X_{t} = \sum_{i=0}^{q}C_{i}Z_{t-i}, \qquad t \in \mathbb{Z},$$
and let the corresponding partial sum process be
\be\label{eq:defVn}
V_{n}(t) = \frac{1}{a_{n}}  \sum_{i=1}^{\floor {nt}}X_{i}, \qquad t \in [0,1],
\ee
where the normalizing sequence $(a_n)$ satisfies~\eqref{eq:niz}.

\begin{thm}\label{t:FinMA}
Let $(Z_{i})_{i \in \mathbb{Z}}$ be a strictly stationary and strongly mixing sequence of regularly varying random variables with index $\alpha \in (0,2)$, such that conditions $(\ref{e:D'cond})$, $(\ref{e:oceknula})$ and $(\ref{e:sim})$ hold. If $\alpha \in [1,2)$, also suppose that condition $(\ref{e:vsvcond})$ holds.
Assume $C_{0}, C_{1}, \ldots , C_{q}$ are random variables, independent of $(Z_{i})$, that satisfy
$(\ref{eq:FiniteMAcond})$. Then
$$ V_{n}(\,\cdot\,) \dto \widetilde{C} V(\,\cdot\,) \qquad \textrm{as} \ n \to \infty,$$
in $D[0,1]$ endowed with the $M_{2}$ topology, where $V$ is an $\alpha$--stable L\'{e}vy process with characteristic triple $(0, \mu, b)$, with $\mu$ as in $(\ref{eq:mu})$,
$$ b = \left\{ \begin{array}{cc}
                                   0, & \quad \alpha = 1,\\[0.4em]
                                   (p-r)\frac{\alpha}{1-\alpha}, & \quad \alpha \in (0,1) \cup (1,2),
                                 \end{array}\right.$$
and $\widetilde{C}$ is a random variable, independent of $V$, such that $\widetilde{C} \eind C$.
\end{thm}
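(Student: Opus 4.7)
My plan is to first reduce the theorem to a deterministic-coefficient problem by conditioning on $(C_0,\ldots,C_q)$, and then to deduce the moving-average limit from the known partial-sum limit for $(Z_i)$ via an $M_{2}$ approximation argument that exploits the sign condition $(\ref{eq:FiniteMAcond})$ to control completed graphs around large innovations. Because $(C_0,\ldots,C_q)$ is independent of $(Z_i)$ and $(\ref{eq:FiniteMAcond})$ holds a.s., a standard conditional-expectation/dominated-convergence argument reduces the theorem to the following deterministic claim: for every $(c_0,\ldots,c_q)\in\RR^{q+1}$ satisfying the sign condition $(\ref{eq:FiniteMAcond})$,
\[
V_n^c(\cdot)\;:=\;\frac{1}{a_n}\sum_{i=1}^{\lfloor n\cdot\rfloor}\sum_{j=0}^{q}c_j Z_{i-j}\;\dto\;cV(\cdot)\quad\text{in }(D[0,1],M_2),
\]
where $c=\sum_{j=0}^q c_j$. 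The unconditioning step then yields a limit $\widetilde{C}V$ with $\widetilde{C}\eind C$ independent of $V$, exactly as required.

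The baseline ingredient is the partial-sum convergence $S_n(\cdot):=a_n^{-1}\sum_{i=1}^{\lfloor n\cdot\rfloor}Z_i\;\std\;V(\cdot)$ in $(D[0,1],J_1)$, which holds under the strong mixing, $D'$, and (when $\alpha\ge 1$) $(\ref{e:vsvcond})$ assumptions together with the centering/symmetry conditions $(\ref{e:oceknula})$--$(\ref{e:sim})$; this is of the type established by Tyran-Kami\'{n}ska~\cite{Ty10a}. Since $J_1$ is stronger than $M_2$, we have $cS_n\std cV$ in $M_2$, and it suffices to show $d_{M_2}(V_n^c,cS_n)\stp 0$. A swap of summation yields the decomposition
\[
V_n^c(t)-cS_n(t)\;=\;\frac{1}{a_n}\sum_{j=0}^q c_j\sum_{k=1-j}^{0}Z_k\;-\;\frac{1}{a_n}\sum_{l=1}^{q}d_l\,Z_{\lfloor nt\rfloor-l+1},\qquad d_l:=\sum_{j=l}^q c_j.
\]
The first term is a fixed $O_P(a_n^{-1})$, uniform in $t$, hence uniformly negligible.

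The second term is \emph{not} uniformly small, and controlling it is the main obstacle. Its spikes coincide precisely with the spikes of $V_n^c$ itself near large innovations, and the point is to show that the cluster of small jumps of $V_n^c$ near each such innovation is, in completed-graph sense, well approximated by a single jump of $cS_n$. Concretely, around every large $Z_k$ the process $V_n^c$ executes $q+1$ consecutive increments at times $k,k+1,\ldots,k+q$ whose $Z_k$-contributions are $c_0 Z_k/a_n,\ldots,c_q Z_k/a_n$, so the $Z_k$-parts of the intermediate values $V_n^c((k+s)/n)-V_n^c((k-1)/n)$ equal $\bigl(\sum_{j=0}^{s}c_j\bigr)Z_k/a_n$ for $s=0,\ldots,q$. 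The sign condition $(\ref{eq:FiniteMAcond})$ forces these partial sums to lie on the line segment from $0$ to $cZ_k/a_n$. Hence the completed graph of $V_n^c$ over the shrinking interval $[(k-1)/n,(k+q)/n]$ lies in a small neighborhood of the vertical segment that constitutes the completed graph of $cS_n$ at its jump $k/n$, and the $M_2$ Hausdorff distance between the two graph pieces tends to $0$. To make this rigorous one truncates $Z_i$ at level $ua_n$: the graph argument above handles the large part, the anti-clustering condition $(\ref{e:D'cond})$ ensures that with high probability no two large innovations fall within a window of length $q+1$ (so the clusters do not interfere), and the small part contributes negligibly in supremum norm by $(\ref{e:vsvcond})$ when $\alpha\ge 1$ and by Karamata's theorem when $\alpha\in(0,1)$. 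Letting $u\downarrow 0$ after $n\to\infty$ gives $d_{M_2}(V_n^c,cS_n)\stp 0$, and the conditioning step then restores the random coefficients and completes the proof.
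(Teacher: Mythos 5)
Your proposal is correct in substance and its core mechanism is the same as the paper's: after isolating the innovation partial-sum process, everything hinges on showing that $V_n$ and $C$ times that process are $M_2$-close, and your summation-swap identity is exactly the paper's Lemma~\ref{l:first} (your first term is $G_n$, your edge term is $H_n(\lfloor nt\rfloor)$), with the sign condition \eqref{eq:FiniteMAcond} and condition $D'$ playing precisely the roles they play in the paper's bounds for the events $B_n^Y$ and $B_n^T$. Where you genuinely diverge is in the packaging. First, you handle the random coefficients by conditioning on $(C_0,\dots,C_q)$ and integrating (Fubini plus bounded convergence), proving a deterministic-coefficient statement for every admissible vector; the paper instead keeps $C$ random throughout, constructs $\widetilde C$ independent of $V$ via Kallenberg's results on the Polish space $(D[0,1],M_1)$, and passes through the joint convergence $(B,V_n^Z)\Rightarrow(\widetilde B,V)$ with the continuous-mapping theorem. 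Your route buys a real simplification: once the coefficients are constants you no longer need the paper's repeated truncation on $\{C_*\le M\}$ followed by $M\to\infty$. Second, for the baseline you invoke $J_1$ convergence of $S_n$ in the style of Tyran-Kami\'{n}ska, whereas the paper only uses $M_1$ convergence from Example 4.1 of Basrak--Krizmani\'{c}--Segers together with Karamata's theorem and \eqref{e:oceknula}--\eqref{e:sim} to remove the centering; since only $M_2$ is needed downstream, you should either verify the point-process hypotheses behind the $J_1$ claim under strong mixing and $D'$ or simply quote the $M_1$ result, which suffices. Third, your Hausdorff-distance argument is stated geometrically (graphs over windows $[(k-1)/n,(k+q)/n]$ near vertical segments) rather than through the paper's explicit event decomposition; to make it complete you must check both directions of the Hausdorff distance (the direction ``points of $\Gamma_{cS_n}$ are near $\Gamma_{V_n^c}$'' uses connectedness of the completed graph over the window together with smallness of the discrepancy at the window endpoints, guaranteed by $D'$), and treat the boundary windows with $k>n-q$, which the paper handles as the separate event $C_n^Y$. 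Finally, note that \eqref{e:vsvcond} is needed only for the baseline limit theorem: your edge term involves at most $q$ innovations at a time, so its ``small-value'' part is controlled directly by the truncation level, not by a maximal inequality.
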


In the proof of the above theorem we will use the following lemma (which can be proven as in Basrak and Krizmani\'{c}~\cite{BaKr}), functional convergence for regularly varying time series with isolated extremes and appropriate modifications of Theorem 2.1 in Krizmani\'{c}~\cite{Kr19}.

\begin{lem}\label{l:first}
With the notation $C_{i}=0$ for $i<0$, it holds that:
\begin{itemize}
  \item[(i)] For $k < q$
  \begin{eqnarray*}
  \sum_{i=1}^{k}\frac{C\,Z_{i}}{a_{n}} - \sum_{i=1}^{k}\frac{X_{i}}{a_{n}} & = & \sum_{u=0}^{k-1}\frac{Z_{k-u}}{a_{n}} \sum_{s=u+1}^{q}C_{s} - \sum_{u=k-q}^{q-1}\frac{Z_{-u}}{a_{n}} \sum_{s=u+1}^{q}C_{s}\\[0.6em]
  & & - \sum_{u=0}^{q-k-1}\frac{Z_{-u}}{a_{n}} \sum_{s=u+1}^{u+k}C_{s}.
  \end{eqnarray*}
  \item[(ii)] For $k \ge q$
  \begin{eqnarray*}
  \sum_{i=1}^{k}\frac{C\,Z_{i}}{a_{n}} - \sum_{i=1}^{k}\frac{X_{i}}{a_{n}} & = & \sum_{u=0}^{q-1}\frac{Z_{k-u}}{a_{n}} \sum_{s=u+1}^{q}C_{s} - \sum_{u=0}^{q-1}\frac{Z_{-u}}{a_{n}} \sum_{s=u+1}^{q}C_{s}\\[0.7em]
  & =: & H_{n}^{k} - G_{n}.
  \end{eqnarray*}
  \item[(iii)] For $q \le k \le n-q$
  \begin{eqnarray*}
  \sum_{i=1}^{k}\frac{C\,Z_{i}}{a_{n}} - \sum_{i=1}^{k+q}\frac{X_{i}}{a_{n}} & = & - \sum_{u=0}^{q-1}\frac{Z_{-u}}{a_{n}} \sum_{s=u+1}^{q}C_{s} - \sum_{u=1}^{q}\frac{Z_{k+u}}{a_{n}} \sum_{s=0}^{q-u}C_{s}\\[0.7em]
  & =: & -G_{n} - T_{n}^{k}.
  \end{eqnarray*}
\end{itemize}
\end{lem}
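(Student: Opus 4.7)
The proof is a direct algebraic computation. The plan is to start from
\[
\sum_{i=1}^k CZ_i - \sum_{i=1}^k X_i \;=\; \sum_{j=0}^q C_j\!\left(\sum_{i=1}^k Z_i - \sum_{i=1-j}^{k-j} Z_i\right),
\]
which follows at once from $X_i=\sum_{j=0}^q C_j Z_{i-j}$ and $C=\sum_{j=0}^q C_j$ by interchanging the two finite sums. All that remains is to simplify the inner bracket and reindex the $u$ and $s$ variables appropriately; for part (iii) the analogous identity with the upper limit $k$ on the left replaced by $k+q$ will be the starting point.

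For (ii), when $k\ge q$ each $j\in\{0,\ldots,q\}$ satisfies $j\le k$, so the ranges $\{1,\ldots,k\}$ and $\{1-j,\ldots,k-j\}$ overlap on $\{1,\ldots,k-j\}$ and cancel, leaving $\sum_{i=k-j+1}^{k}Z_i-\sum_{i=1-j}^{0}Z_i$. Setting $u=k-i$ in the first sum and $u=-i$ in the second yields $u\in\{0,\ldots,j-1\}$ in both cases, and swapping the order of summation so that $u$ is outer while $s=j$ runs from $u+1$ to $q$ produces (ii) upon dividing by $a_n$.

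For (i), when $k<q$, the identity $\sum_{i=1}^{k}Z_i-\sum_{i=1-j}^{k-j}Z_i=\sum_{i=k-j+1}^{k}Z_i-\sum_{i=1-j}^{0}Z_i$ still holds (the block $\{k-j+1,\ldots,0\}$ appears on both sides). After the same substitutions and order swap, the $Z_{k-u}$ sum has $u\in\{0,\ldots,q-1\}$, and I would split it at $u=k-1$: the part $u\in\{0,\ldots,k-1\}$ gives the first summand of (i), while the part $u\in\{k,\ldots,q-1\}$ is reindexed by $v=u-k$ so that it involves $Z_{-v}$. Combining this reindexed piece with the $Z_{-u}$ sum through the identity $\sum_{s=u+1}^{q}C_s-\sum_{s=u+k+1}^{q}C_s=\sum_{s=u+1}^{u+k}C_s$, together with the convention $C_i=0$ for $i<0$, produces the remaining two summands.

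Part (iii) proceeds analogously. From
\[
\sum_{i=1}^k CZ_i - \sum_{i=1}^{k+q} X_i \;=\; \sum_{j=0}^q C_j\!\left(\sum_{i=1}^k Z_i - \sum_{i=1-j}^{k+q-j} Z_i\right)
\]
and the fact that $k\ge q$ forces $k+q-j\ge k$, the bracket equals $-\sum_{i=1-j}^{0}Z_i-\sum_{i=k+1}^{k+q-j}Z_i$; substituting $u=-i$ and $u=i-k$ respectively, and swapping the order of summation as in (ii), gives $-G_n-T_n(k)$. The main obstacle is the case split in (i), which produces three summands rather than two and requires careful bookkeeping of the summation ranges; (ii) and (iii) are routine index manipulations once the initial identity has been written down.
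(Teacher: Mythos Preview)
Your approach is correct and is the standard one; the paper itself does not prove the lemma but cites Basrak--Krizmani\'{c}~\cite{BaKr}, where precisely this direct manipulation is carried out. Parts (ii) and (iii) go through exactly as you describe.

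One point worth flagging in (i): if you carry your splitting and reindexing through carefully, the middle summand comes out as
\[
-\sum_{u=q-k}^{q-1}\frac{Z_{-u}}{a_{n}}\sum_{s=u+1}^{q}C_{s},
\]
with lower limit $q-k$, not $k-q$ as printed in the statement. A quick check with $k=1$, $q=2$ confirms that the version with $k-q$ does not reproduce the left-hand side, so this is a typo in the paper; your computation yields the correct identity. As a consequence, every inner sum over $s$ in (i) already has $s\ge 1$, so the convention $C_{i}=0$ for $i<0$ is never actually invoked here and you can drop that remark from your write-up.
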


\begin{proof} ({\it Theorem~\ref{t:FinMA}})
Condition (\ref{e:D'cond}) and the strong mixing property imply that the extremes of the sequence $(Z_{i})$ are isolated, i.e.~$\theta=1$ (see Leadbetter and Rootz\'{e}n~\cite{LeRo88}, page 439, and Leadbetter et al.~\cite{LLR83}, Theorem 3.4.1) and the tail process $(Y_{i})$ of the sequence $(Z_{i})$ defined by $\Pr(|Y_{0}|>y)=y^{-\alpha}$ for $y \geq 1$ and
 \begin{equation*}\label{e:tailprocess}
  \bigl( (x^{-1}Z_i)_{i \in \mathbb{Z}} \, \big| \, |Z_0| > x \bigr)
  \fidi (Y_i)_{i \in \mathbb{Z}} \quad \textrm{as} \ x \to \infty,
\end{equation*}
is the same as for an i.i.d.~sequence, that is, $Y_{i}=0$ for $i \neq 0$, and $Y_{0}$ is as described above, see Basrak et al.~\cite{BKS}.
As a special case of their main theorem on functional $M_{1}$ convergence of partial sum processes of stationary, regularly varying sequences for which all extremes within each cluster of large values have the same sign (i.e.~the corresponding tail process almost surely has no two values of the opposite sign), Basrak et al.~\cite{BKS} obtained $M_{1}$ convergence for processes with isolated extremes. More precisely, they showed that for strictly stationary and strongly mixing sequences of regularly varying random variables that satisfy the dependence condition (\ref{e:D'cond}) and the vanishing small values condition (\ref{e:vsvcond}), the properly centered partial sum process converges in distribution to an $\alpha$--stable L\'{e}vy process in $D[0,1]$ with the $M_{1}$ topology (see Basrak et al.~\cite{BKS}, Theorem 3.4 and Example 4.1). We apply this result directly to our case to conclude that, as $n \to \infty$,
\begin{equation*}
  \sum_{i=1}^{\floor {nt} }\frac{Z_{i}}{a_{n}} - \floor{nt} \mathrm{E} \Big(\frac{Z_{1}}{a_{n}} 1_{\{|Z_{1}| \leq a_{n}\}} \Big), \qquad t \in [0,1],
\end{equation*}
converges in distribution in $D[0,1]$ with the $M_{1}$ topology to an $\alpha$--stable L\'{e}vy process with characteristic triple $(0,\mu,0)$ with $\mu$ as in (\ref{eq:mu}).

By Karamata's theorem, as $n \to \infty$,
\begin{eqnarray*}
  n\,\mathrm{E} \Big( \frac{Z_{1}}{a_{n}} 1_{\{ |Z_{1}| \leq a_{n} \}} \Big) \to (p-r)\frac{\alpha}{1-\alpha}, && \textrm{if}  \ \ \alpha < 1,\\[0.5em]
  n\,\mathrm{E} \Big( \frac{Z_{1}}{a_{n}} 1_{\{ |Z_{1}| > a_{n} \}} \Big) \to (p-r)\frac{\alpha}{\alpha-1}, && \textrm{if} \ \ \alpha >1,
\end{eqnarray*}
with $p$ and $r$ as in (\ref{eq:pq}). Therefore conditions (\ref{e:oceknula}) and (\ref{e:sim}), Corollary 12.7.1 in Whitt~\cite{Whitt02} (which gives a sufficient condition for addition to be continuous in the $M_{1}$ topology) and the continuous mapping theorem yield that
$V_{n}^{Z}(\,\cdot\,) \dto V(\,\cdot\,)$ in $D[0,1]$ with the $M_{1}$ topology, where
$$ V_{n}^{Z}(t) := \sum_{i=1}^{\floor {nt}}\frac{Z_{i}}{a_{n}}, \qquad t \in [0,1],$$
and
 $V$ is an $\alpha$--stable L\'{e}vy process
 with characteristic triple
$(0,\mu,0)$ if $\alpha=1$ and $(0,\mu,(p-r)\alpha/(1-\alpha))$ if $\alpha \in (0,1) \cup (1,2)$.

Since the space $D[0,1]$ equipped with the $M_{1}$ topology is a Polish space (see Section 14 in Billingsley~\cite{Bi68} and Section 12.8 in Whitt~\cite{Whitt02}), by Corollary 5.18 in Kallenberg~\cite{Ka97}  we can find a random variable $\widetilde{C}$, independent of $V$, such that $\widetilde{C} \eind C$. This and the fact that $C$ is independent of $V_{n}^{Z}$, by an application of Theorem 3.29 in Kallenberg~\cite{Ka97}, imply
  \begin{equation}\label{e:zajedkonvK}
   (B(\,\cdot\,), V_{n}^{Z}(\,\cdot\,)) \dto (\widetilde{B}(\,\cdot\,), V(\,\cdot\,)), \qquad \textrm{as} \ n \to \infty,
  \end{equation}
  in $D([0,1], \mathbb{R}^{2})$ with the product $M_{1}$ topology, where $B(t)=C$ and $\widetilde{B}(t)=\widetilde{C}$ for $t \in [0,1]$.
Applying the continuous mapping theorem to relation (\ref{e:zajedkonvK}) we obtain
$ B(\,\cdot\,) V_{n}^{Z}(\,\cdot\,) \dto \widetilde{B}(\,\cdot\,) V(\,\cdot\,)$ as $n \to \infty$, i.e.
$ C V_{n}^{Z}(\,\cdot\,) \dto \widetilde{C} V(\,\cdot\,)$ in $D[0,1]$ with the $M_{1}$ topology. Since $M_{1}$ convergence implies $M_{2}$ convergence, we have
\be
CV_{n}^{Z}(\,\cdot\,) \dto \widetilde{C} V(\,\cdot\,), \qquad \textrm{as} \ n \to \infty,
\ee
in $(D[0,1], d_{M_{2}})$ as well. It remains to show that for every $\epsilon >0$
$$ \lim_{n \to \infty}\Pr[d_{M_{2}}(CV_{n}^{Z}, V_{n})> \epsilon]=0,$$
since then by an application of  Slutsky's theorem (see for instance Theorem 3.4 in Resnick~\cite{Resnick07})
it will follow that
$V_{n}(\,\cdot\,) \dto \widetilde{C} V(\,\cdot\,)$ in $(D[0,1], d_{M_{2}})$.

Fix $\epsilon >0$ and let $n \in \mathbb{N}$ be large enough, i.e.
 $n > \max\{2q, 2q/\epsilon \}$.
By the definition of the metric $d_{M_{2}}$ we have
\begin{eqnarray*}
  d_{M_{2}}(CV_{n}^{Z},V_{n}) &=& \bigg(\sup_{a \in \Gamma_{CV_{n}^{Z}}} \inf_{b \in \Gamma_{V_{n}}} d(a,b) \bigg) \vee \bigg(\sup_{a \in \Gamma_{V_{n}}} \inf_{b \in \Gamma_{CV_{n}^{Z}}} d(a,b) \bigg) \\[0.4em]
   &= :& Y_{n} \vee T_{n},
\end{eqnarray*}
and therefore
\be\label{eq:AB}
\Pr [d_{M_{2}}(CV_{n}^{Z}, V_{n})> \epsilon ] \leq \Pr(Y_{n}>\epsilon) + \Pr(T_{n}>\epsilon)\,.
\ee

By the same arguments as in the proof of Theorem 2.1 in Krizmani\'{c}~\cite{Kr19} (see also Basrak and Krizmani\'{c}~\cite{BaKr}) for the first term on the right hand side of (\ref{eq:AB}) we have
\begin{eqnarray}\label{eq:Yn}
  \nonumber\{Y_{n} > \epsilon\} & \subseteq & \{\exists\,a \in \Gamma_{CV_{n}^{Z}} \ \textrm{such that} \ d(a,b) > \epsilon \ \textrm{for every} \ b \in \Gamma_{V_{n}} \} \\[0.6em]
  \nonumber & \subseteq & \{\exists\,k \in \{1,\ldots,q-1\} \ \textrm{such that} \ | CV_{n}^{Z}(k/n) - V_{n}(k/n)| > \epsilon \}\\[0.6em]
  \nonumber & & \cup \ \{\exists\,k \in \{q,\ldots,n-q\} \ \textrm{such that} \ | CV_{n}^{Z}(k/n) - V_{n}(k/n)| > \epsilon\\[0.6em]
  \nonumber & & \hspace*{1.5em} \textrm{and} \  | CV_{n}^{Z}(k/n) - V_{n}((k+q)/n)| > \epsilon \}\\[0.6em]
  \nonumber & & \cup \ \{\exists\,k \in \{n-q+1,\ldots,n\} \ \textrm{such that} \ | CV_{n}^{Z}(k/n) - V_{n}(k/n)| > \epsilon \}\\[0.6em]
  & =: & A^{Y}_{n} \cup B^{Y}_{n} \cup C^{Y}_{n}.
\end{eqnarray}

By Lemma~\ref{l:first} (i) we obtain
\begin{eqnarray}\label{eq:Bnlemmafirst}
  \nonumber \Pr (A^{Y}_{n}) & \leq &  \sum_{k=1}^{q-1} \Pr \Big( \Big| \sum_{i=1}^{k}\frac{C Z_{i}}{a_{n}} - \sum_{i=1}^{k}\frac{X_{i}}{a_{n}} \Big| >\epsilon \Big) \\[0.6em]
    \nonumber  & \leq & \sum_{k=1}^{q-1} \bigg[ \Pr \Big( \sum_{u=0}^{k-1}\frac{|Z_{k-u}|}{a_{n}}
         \sum_{s=u+1}^{q}|C_{s}| > \frac{\epsilon}{3} \Big) + \Pr \Big( \sum_{u=k-q}^{q-1}\frac{|Z_{-u}|}{a_{n}} \sum_{s=u+1}^{q}|C_{s}| > \frac{\epsilon}{3} \Big)\\[0.6em]
  \nonumber  & & \hspace*{2em} + \Pr \Big( \sum_{u=0}^{q-k-1}\frac{|Z_{-u}|}{a_{n}} \sum_{s=u+1}^{u+k}|C_{s}| > \frac{\epsilon}{3} \Big) \bigg]\\[0.6em]
      & \leq &  3(q-1)(2q-1) \Pr \Big( \frac{|Z_{0}|}{a_{n}}\,C_{*} > \frac{\epsilon}{3(2q-1)} \Big),
  \end{eqnarray}
   where $C_{*} = \sum_{s=0}^{q}|C_{s}|$. Take now $M>0$ arbitrary and note
   \begin{eqnarray*}
     \Pr \Big( \frac{|Z_{0}|}{a_{n}}\,C_{*} > \frac{\epsilon}{3(2q-1)} \Big) &&  \\[0.7em]
      &\hspace*{-16em} =&  \hspace*{-8em} \Pr \Big( \frac{|Z_{0}|}{a_{n}}\,C_{*} > \frac{\epsilon}{3(2q-1)},\,C_{*} > M \Big) + \Pr \Big( \frac{|Z_{0}|}{a_{n}}\,C_{*} > \frac{\epsilon}{3(2q-1)},\,C_{*} \leq M \Big)\\[0.7em]
       &\hspace*{-16em} \leq &  \hspace*{-8em} \Pr \Big( C_{*} > M \Big) + \Pr \Big( \frac{|Z_{0}|}{a_{n}} > \frac{\epsilon}{3(2q-1)M} \Big).
   \end{eqnarray*}
Since by the
 regular variation property
it holds that
\begin{equation*}
\lim_{n \to \infty} \Pr \Big( \frac{|Z_{0}|}{a_{n}} > \frac{\epsilon}{3(2q-1)M} \Big) =0,
\end{equation*}
 from (\ref{eq:Bnlemmafirst}) we get
$$ \limsup_{n \to \infty} \Pr (A^{Y}_{n}) \leq \Pr \Big(C_{*} > M \Big).$$
Letting $M \to \infty$ we conclude
 \be\label{eq:setBn1}
 \lim_{n \to \infty} \Pr (A^{Y}_{n}) = 0.
 \ee
Similarly
  \be\label{eq:setBn4}
   \lim_{n \to \infty} \Pr (C^{Y}_{n})=0.
  \ee
Next, using Lemma~\ref{l:first} (ii) and (iii), for an arbitrary $M>0$
we obtain
\begin{eqnarray}\label{e:setBn1new}
   \nonumber \Pr (B^{Y}_{n}\cap \{ C_{*} \leq M \}) & = &  \Pr \Big( \exists\,k \in \{q,\ldots,n-q\} \ \textrm{such that} \ |H_{n}^{k}-G_{n}| > \epsilon \\[0.6em]
   \nonumber  & & \hspace*{2em} \textrm{and} \ |-G_{n}-T_{n}^{k}| > \epsilon,\,C_{*} \leq M \Big)\\[0.6em]
   & \hspace*{-19em} \leq & \hspace*{-9.5em} \Pr \Big( |G_{n}|> \frac{\epsilon}{2},\,C_{*} \leq M \Big) + \sum_{k=q}^{n-q} \Pr \Big( |H_{n}^{k}| >
       \frac{\epsilon}{2},\,|T_{n}^{k}| > \frac{\epsilon}{2},\,C_{*} \leq M \Big).
\end{eqnarray}
Note that
\begin{eqnarray*}
  \Pr \Big( |G_{n}|> \frac{\epsilon}{2},\,C_{*} \leq M \Big) & \leq & \Pr \Big( C_{*} \sum_{u=0}^{q-1}\frac{|Z_{-u}|}{a_{n}}> \frac{\epsilon}{2},\,C_{*} \leq M \Big)  \\[0.5em]
   & \leq &  \Pr \Big( \sum_{u=0}^{q-1}\frac{|Z_{-u}|}{a_{n}}> \frac{\epsilon}{2M} \Big)\\[0.5em]
   & \leq &  q \Pr \Big( \frac{|Z_{0}|}{a_{n}} > \frac{\epsilon}{2qM} \Big),
\end{eqnarray*}
and an application of the regular variation property yields
\begin{equation}\label{e:setBn2new}
 \lim_{n \to \infty} \Pr \Big( |G_{n}|> \frac{\epsilon}{2},\,C_{*} \leq M \Big) = 0.
\end{equation}
Further, since
$$
 H_n^{k}= \sum_{u=0}^{q-1}\frac{Z_{k-u}}{a_{n}} \sum_{s=u+1}^{q}C_{s}
\ \mbox{ and } \ T_n^{k} = \sum_{u=1}^{q}\frac{Z_{k+u}}{a_{n}}
 \sum_{s=0}^{q-u}C_{s},
 $$
  for a fixed $k \in \{q,\ldots,n-q\}$, on the event
$ \{ |H_{n}^{k}|
> \epsilon/2  \ \textrm{and} \
|T_{n}^{k}|
> \epsilon/2,\,C_{*} \leq M \}$
 there exist $i \in \{k-(q-1),\ldots,k\}$ and $j \in \{k+1,\ldots,k+q\}$ such that
 $$\frac{|Z_{i}|}{a_{n}} >  \frac{\epsilon}{2qM} \quad \textrm{and} \quad \frac{|Z_{j}|}{a_{n}}
> \frac{\epsilon}{2qM}.$$
Therefore, using the stationarity of the sequence $(Z_{i})$ we obtain
\begin{eqnarray*}
  \Pr \Big( |H_{n}^{k}| >
       \frac{\epsilon}{2} \ \textrm{and} \ |T_{n}^{k}| > \frac{\epsilon}{2},\,C_{*} \leq M \Big) & &\\[0.6em]
   & \hspace*{-26em} \leq & \hspace*{-13em} \sum_{\scriptsize \begin{array}{c}
                          i=k-(q-1),\ldots,k \\
                          j=k+1,\ldots,k+q
                        \end{array}} \Pr \Big( \frac{|Z_{i}|}{a_{n}} >
       \frac{\epsilon}{2qM}, \frac{|Z_{j}|}{a_{n}} > \frac{\epsilon}{2qM}\Big)\\[0.6em]
   & \hspace*{-26em} \leq & \hspace*{-13em} q \sum_{j=1}^{2q-1} \Pr \Big( \frac{|Z_{0}|}{a_{n}} >
       \frac{\epsilon}{2qM}, \frac{|Z_{j}|}{a_{n}} > \frac{\epsilon}{2qM}\Big),
\end{eqnarray*}
and hence for all positive integers $s \leq n/(2q-1)$ it holds
\begin{eqnarray*}
  \sum_{k=q}^{n-q} \Pr \Big( |H_{n}^{k}| >
       \frac{\epsilon}{2} \ \textrm{and} \ |T_{n}^{k}| > \frac{\epsilon}{2},\,C_{*} \leq M \Big) & &  \\[0.6em]
   & \hspace*{-22em} \leq & \hspace*{-11em} n q \sum_{j=1}^{\lfloor n/s \rfloor} \Pr \Big( \frac{|Z_{0}|}{a_{n}} >
       \frac{\epsilon}{2qM}, \frac{|Z_{j}|}{a_{n}} > \frac{\epsilon}{2qM}\Big).
\end{eqnarray*}
From this, taking into account condition (\ref{e:D'cond}), we conclude that
\begin{equation*}\label{e:setBn3new}
 \lim_{n \to \infty} \sum_{k=q}^{n-q} \Pr \Big( |H_{n}^{k}| >
       \frac{\epsilon}{2} \ \textrm{and} \ |T_{n}^{k}| > \frac{\epsilon}{2},\,C_{*} \leq M \Big) = 0.
\end{equation*}
Together with relations (\ref{e:setBn1new}) and (\ref{e:setBn2new}) this implies
 $$ \lim_{n \to \infty}  \Pr (B^{Y}_{n}\cap \{ C_{*} \leq M \})=0.$$
 Thus
 $$ \limsup_{n \to \infty} \Pr (B^{Y}_{n}) \leq \limsup_{n \to \infty} \Pr (B^{Y}_{n}\cap \{ C_{*} > M \} \leq \Pr (C_{*} > M),$$
and letting again $M \to \infty$ we conclude
 \be\label{eq:setBn3}
 \lim_{n \to \infty} \Pr (B^{Y}_{n}) = 0.
 \ee
  From relations (\ref{eq:Yn}), (\ref{eq:setBn1}), (\ref{eq:setBn4}) and (\ref{eq:setBn3}) we obtain
  \be\label{eq:Ynend}
  \lim_{n \to \infty} \Pr(Y_{n} > \epsilon ) =0.
  \ee

 In order to estimate the second term on the right hand side of (\ref{eq:AB}) define
  for each $k \geq q$ the numbers $V^{Z,\min}_k = \min\{CV^Z_n((k-q)/n), CV^Z_n(k/n) \}$
and $V^{Z,\max}_k = \max\{CV^Z_n((k-q)/n), CV^Z_n(k/n) \}$. Following the arguments in the proof of Theorem 2.1 in Krizmani\'{c}~\cite{Kr19} we obtain
\begin{eqnarray}\label{eq:Zn}
  \nonumber\{T_{n} > \epsilon\} & \subseteq & \{\exists\,a \in \Gamma_{V_{n}} \ \textrm{such that} \ d(a,b) > \epsilon \ \textrm{for every} \ b \in \Gamma_{CV_{n}^{Z}} \} \\[0.6em]
  \nonumber & \subseteq & \{\exists\,k \in \{1,\ldots,2q-1\} \
\ \textrm{such that} \ | V_{n}(k/n) - CV_{n}^{Z}(k/n)| > \epsilon \}\\[0.6em]
  \nonumber & & \cup \ \Big\{\exists\,k \in \{2q,\ldots,n\} \
   \textrm{such that} \
  \widetilde{d}(V_{n}(k/n), [V_{k}^{Z, \min}, V_{k}^{Z, \max}]) > \epsilon
    \Big\}\\[0.6em]
  & =: & A^{T}_{n} \cup B^{T}_{n},
\end{eqnarray}
where $\widetilde{d}$ is the Euclidean metric on $\mathbb{R}$.
Using Lemma~\ref{l:first} (i) and (ii), one could similarly as before for the set $A^{Y}_{n}$ obtain
\be\label{eq:Cnfirst}
\lim_{n \to \infty} \Pr( A^{T}_{n})=0.
\ee
 Note that $\Pr (B_n^T)$ is bounded above by
\begin{eqnarray*}
\lefteqn{ \Pr \left(
\exists\,k \in \{2q,\ldots,n\} \ \textrm{such that} \
\sum_{i=1}^k \frac{X_i}{a_n} > V^{Z,\max}_k + \epsilon \right)}\\
&+&
\Pr \left(
\exists\,k \in \{2q,\ldots,n\} \ \textrm{such that} \
\sum_{i=1}^k \frac{X_i}{a_n} < V^{Z,\min}_k - \epsilon \right)\,.
\end{eqnarray*}
We consider only the first of these two probabilities,
since the other one can be handled in a similar manner.
Using Lemma~\ref{l:first} the first probability can be bounded  by
\begin{eqnarray*}
\lefteqn{\Pr \left(
\exists\,k \in \{2q,\ldots,n\} \ \textrm{such that} \
G_n - H_n^{k} > \epsilon \ \mbox{ and }\
G_n + T_n^{k-q} > \epsilon   \right)} \\[0.3em]
& \leq &
\Pr \left(G_n  >  \frac{\epsilon}{2}\right)\\[0.3em]
&&  +
\Pr \left(
\exists\,k \in \{2q,\ldots,n\} \ \textrm{such that} \
H_n^{k} < -  \frac{\epsilon}{2} \ \mbox{ and }\
T_n^{k-q} >  \frac{\epsilon}{2}   \right)\,.
\end{eqnarray*}
From the calculations yielding (\ref{eq:setBn3}) we conclude that $\Pr (G_n  > \epsilon/2 )\to 0$
 as $n \to \infty$. The second term is bounded by
\begin{equation}\label{eq:BnTpom}
\Pr(C_{*}> M) + \sum_{k=2q}^{n} \Pr \left(
H_{n}^{k} < - \frac{\epsilon}{2}
\ \mbox{ and }
T_{n}^{k-q} > \frac{\epsilon}{2},\,C_{*} \leq M
 \right)
\end{equation}
for an arbitrary $M>0$. Note that
$$
 H_n^{k}= \sum_{u=0}^{q-1}\frac{Z_{k-u}}{a_{n}} \sum_{s=u+1}^{q}C_{s}
\ \mbox{ and } \ T_n^{k-q} = \sum_{u=0}^{q-1}\frac{Z_{k-u}}{a_{n}}
 \sum_{s=0}^{u}C_{s}.
 $$
Hence for a fixed $k \in \{2q,\ldots,n\}$, on the event
$ \{ H_{n}^{k}
< - \epsilon/2  \ \textrm{and} \
T_{n}^{k-q}
> \epsilon/2,\,C_{*} \leq M \}$
 there exist $i,j \in \{0,\ldots,q-1\}$ such that
 $$\frac{Z_{k-i}}{a_{n}} \sum_{s=i+1}^{q}C_{s}
< - \frac{\epsilon}{2q} \quad \textrm{and} \quad \frac{Z_{k-j}}{a_{n}}
 \sum_{s=0}^{j}C_{s}
> \frac{\epsilon}{2q}.$$
Condition \eqref{eq:FiniteMAcond} implies the sums
$\sum_{s=0}^{j}C_{s}$
and $ \sum_{s=i+1}^{q}C_{s}$ are a.s.~of the same sign, and since
their absolute values are bounded by $C_{*}$, we obtain
$|Z_{k-i}|M/a_{n} > \epsilon/(2q)$ and
$|Z_{k-j}|M/a_{n} > \epsilon/(2q)$.
The case $i=j$ is not possible since then we would have $Z_{k-i}<0$ and $Z_{k-i}>0$. From this, using the stationarity of the sequence $(Z_{i})$, we conclude that the expression in (\ref{eq:BnTpom}) is bounded by
\begin{eqnarray*}\label{e:RVdet2}
\nonumber & & \hspace*{-2em} {\Pr(C_{*}> M) +  n \Pr\left(
\exists\, i,j \in \{0,\ldots,q-1\},\,i \neq j \ \textrm{s.t.} \
M \frac{|Z_{-i }|}{a_{n}} >  \frac{\epsilon}{2q}
 \mbox{ and } M \frac{|Z_{-j }|}{a_{n}} >  \frac{\epsilon}{2q} \right)} \\
& \leq & \Pr(C_{*}> M) + 2n (q-1) \sum_{j=1}^{q-1}
  \Pr\left(
 \frac{|Z_{0}|}{a_{n}} >   \frac{\epsilon}{2 q M}, \frac{|Z_{j}|}{a_{n}} >   \frac{\epsilon}{2 q M}
 \right),
\end{eqnarray*}
which, similarly as before when considering the set $B_{n}^{Y}$, tends to 0 if we first let $n\toi$ and then $M \to \infty$.
Together with relations (\ref{eq:Zn}) and (\ref{eq:Cnfirst}) this implies
\be\label{eq:Tnend}
\lim_{n \to \infty} \Pr(T_{n}>\epsilon)=0.
\ee
Now from (\ref{eq:AB}), (\ref{eq:Ynend}) and (\ref{eq:Tnend}) we obtain
\be
\lim_{n \to \infty} \Pr [d_{M_{2}}(CV_{n}^{Z}, V_{n})> \epsilon ]=0,
\ee
and finally we conclude that $V_{n}(\,\cdot\,) \dto \widetilde{C} V(\,\cdot\,)$, as $n \to \infty$, in $(D[0,1], d_{M_{2}})$.
This concludes the proof.
\end{proof}

\begin{exmp}\label{ex:M2fails}
Condition (\ref{e:D'cond}) prohibits clustering of extreme values in the sequence $(Z_{i})$, which means that it has extremal index $\theta=1$. Here we give an example when clustering of extreme values occurs, and all conditions in Theorem~\ref{t:FinMA} hold except condition (\ref{e:D'cond}), but the convergence of the partial sum stochastic process $V_{n}$, as defined in (\ref{eq:defVn}), in $D[0,1]$ with the $M_{2}$ topology fails to hold.

Let $(\xi_{i})_{i}$ be a sequence of i.i.d.~regularly varying random variables with index of regular variation $\alpha \in (0,2)$. Define
$$ Z_{i}= \xi_{i} + \xi_{i-2}, \qquad i \in \mathbb{Z},$$
and assume conditions $(\ref{e:oceknula})$ and $(\ref{e:sim})$ hold.
The sequence $(Z_{i})$ is strictly stationary and consists of regularly varying random variables (see Proposition 7.4 in Resnick~\cite{Resnick07} and Theorem 1.28 in Lindskog~\cite{Lindskog04}). Consider the finite order moving average process
$$ X_{t}= Z_{t}-Z_{t-1}+Z_{t-2}, \qquad t \in \mathbb{Z}.$$
Hence $q=2$, $C_{0}=1$, $C_{1}=-1$, $C_{2}=1$, and condition $(\ref{eq:FiniteMAcond})$ clearly holds.
 Let $(a_{n})$ be a sequence of positive real numbers for which (\ref{eq:niz}) holds. By Lemma 1.2 in Cline~\cite{Cl83}
$$ \lim_{x \to \infty} \frac{\Pr(|Z_{0}|>x)}{\Pr(|\xi_{0}|>x)}=3,$$
which yields
\begin{equation}\label{eq:nizxi}
\lim_{n \to \infty} n \Pr(|\xi_{0}|>a_{n}) = 1/3.
\end{equation}
This together with the regular variation property of $\xi_{i}$ and the following inequality
\begin{eqnarray*}
 n \Pr \Big( \frac{|Z_{0}|}{a_{n}} >x, \frac{|Z_{2}|}{a_{n}} >x \Big) & \geq & n \Pr \Big( \frac{|\xi_{0}|}{a_{n}} > 2x, \frac{|\xi_{2}|}{a_{n}} \leq x, \frac{|\xi_{-2}|}{a_{n}} \leq x \Big)\\[0.4em]
  & = & n \Pr \Big( \frac{|\xi_{0}|}{a_{n}} > 2x \Big) \Big[ \Pr \Big( \frac{|\xi_{0}|}{a_{n}} \leq x \Big) \Big]^{2},
 \end{eqnarray*}
 implies
$$ \liminf_{n \to \infty} n \Pr \Big( \frac{|Z_{0}|}{a_{n}} >x, \frac{|Z_{2}|}{a_{n}} >x \Big) \geq \frac{1}{3} (2x)^{-\alpha} >0,$$
for $x>0$, and therefore we conclude that condition (\ref{e:D'cond}) does not hold. The sequence $(Z_{i})$ is $2$--dependent, and hence strongly mixing, with extremal index $\theta= 1/2$ (see Embrechts et al.~\cite{EmKlMi97}, page 415).

Next we show that $V_{n}$ does not converge in distribution under the $M_{2}$ topology on $D[0,1]$. For this, according to Skorohod~\cite{Sk56} (cf.~Proposition 2 in Avram and Taqqu~\cite{AvTa92}), it suffices to show that
 \begin{equation}\label{e:osc1}
 \lim_{\delta \to 0} \limsup_{n \to \infty} \Pr [ \triangle_{M_{2}}(\delta, V_{n}) > \epsilon ] > 0
 \end{equation}
 for some $\epsilon >0$, where
 $$ \triangle_{M_{2}}(\delta, x) = \sup_{{\footnotesize \begin{array}{c}
                                0 \leq t \leq 1 \\
                                t_{\delta} \leq t_{1} \leq t_{\delta} +\delta/2\\
                                t_{\delta}^{*} -\delta/2 \leq t_{2} \leq t_{\delta}^{*}
                              \end{array}}
} M(x(t_{1}), x(t), x(t_{2}))$$
($x \in D[0,1], \delta >0)$, $t_{\delta}=\max\{0, t-\delta \}$, $t_{\delta}^{*}=\min\{1, t+ \delta \}$, and
$$ M(x_{1},x_{2},x_{3}) = \left\{ \begin{array}{ll}
                                   0, & \ \ \textrm{if} \ x_{2} \in [x_{1}, x_{3}], \\
                                   \min\{ |x_{2}-x_{1}|, |x_{3}-x_{2}| \}, & \ \ \textrm{otherwise}.
                                 \end{array}\right.$$
Note that $M(x_{1},x_{2},x_{3})$ is the distance from $x_{2}$ to $[x_{1}, x_{3}]$, and $\triangle_{M_{2}}(\delta, x)$ is the $M_{2}$ oscillation of $x$.
To show (\ref{e:osc1}) we use, with appropriate modifications, the procedure of Avram and Taqqu~\cite{AvTa92} in the proof of their Theorem 1.

Let $i'=i'(n)$ be the index at which $\max_{1 \leq i \leq n-2}|\xi_{i}|$ is obtained. Fix $\epsilon >0$ and introduce the events
 $$A_{n,\epsilon} = \{ |\xi_{i'}| > \epsilon a_{n} \} = \Big\{ \max_{1 \leq i \leq n-2} |\xi_{i}| > \epsilon a_{n}\Big\}$$
 and
 $$ B_{n,\epsilon} = \{ |\xi_{i'}| >\epsilon a_{n} \ \textrm{and} \ \exists\,l
 \neq 0, -i'-3 \leq l \leq 1, \ \textrm{such that} \ |\xi_{i'+l}| > \epsilon a_{n} / 8 \}.$$
Using the facts that $(\xi_{i})$ is an i.i.d.~sequence and $n
 \Pr (|\xi_{0}|> \lambda a_{n}) \to \lambda^{-\alpha}/2$ as $n \to \infty$, for $\lambda >0$
 (which follows from the regular variation property of $\xi_{0}$ and (\ref{eq:nizxi})) we get
\begin{equation}\label{e:limAn}
   \Pr (A_{n,\epsilon}) = 1- \bigg[ 1 - \frac{n \Pr(|\xi_{0}|>\epsilon a_{n})}{n} \bigg]^{n-2} \to 1 - e^{-\epsilon^{-\alpha}/2},
\end{equation}
as $n \to \infty$,
and
 \begin{eqnarray}\label{e:limBn}
\nonumber \limsup_{n \to \infty} \Pr (B_{n,\epsilon})  & \leq & \limsup_{n \to \infty} \Pr \bigg( \bigcup_{i=1}^{n-2} \bigcup_{\footnotesize \begin{array}{c}
                                l=-(n-2)-3 \\
                                l \neq 0
                              \end{array}}^{1} \{ |\xi_{i}|>\epsilon a_{n}, |\xi_{i+l}|>\epsilon a_{n}/8\} \bigg)\\[0.4em]
\nonumber & \leq & \limsup_{n \to \infty}\,(n-2)(n+2) \Pr (|\xi_{0}|>\epsilon a_{n}) \Pr(|\xi_{0}| > \epsilon a_{n}/8)\\[0.4em]
 & = & \frac{\epsilon^{-2\alpha}}{4 \cdot 8^{-\alpha}}.
 \end{eqnarray}
On the event $A_{n,\epsilon} \setminus B_{n,\epsilon}$
one has $|\xi_{i'}| > \epsilon a_{n}$ and $|\xi_{i'+l}| \leq \epsilon a_{n}/8$ for every $l \in \{-i'-3, \ldots 1\} \setminus \{0\}$, and hence since
$$ V_{n}(t) = \sum_{i=1}^{\lfloor nt \rfloor}\frac{X_{i}}{a_{n}} = \frac{1}{a_{n}} \Big[ \xi_{\lfloor nt \rfloor} + 2 \xi_{\lfloor nt \rfloor -2} + \xi_{\lfloor nt \rfloor -3} + 2( \xi_{\lfloor nt \rfloor -4} + \ldots + \xi_{1}) +  \xi_{0}  + 2 \xi_{-1} + \xi_{-3} \Big],$$
we have
\begin{eqnarray}\label{e:inc1}
  \nonumber \Big| V_{n} \Big( \frac{i'}{n} \Big) - V_{n} \Big( \frac{i'-1}{n} \Big) \Big| & = & \frac{|\xi_{i'}-\xi_{i'-1}+2\xi_{i'-2}-\xi_{i'-3}+\xi_{i'-4}|}{a_{n}}\\[0.3em]
   & >&  \epsilon - \frac{5\epsilon}{8}= \frac{3\epsilon}{8}
\end{eqnarray}
and
\begin{eqnarray}\label{e:inc2}
  \Big| V_{n} \Big( \frac{i'+1}{n} \Big) - V_{n} \Big( \frac{i'}{n} \Big) \Big| &=& \frac{|\xi_{i'+1}-\xi_{i'}+2\xi_{i'-1}-\xi_{i'-2}+\xi_{i'-3}|}{a_{n}}\\[0.3em]
   & >& \epsilon - \frac{5\epsilon}{8}=\frac{3\epsilon}{8}.
\end{eqnarray}
Further, on the event $A_{n,\epsilon} \setminus B_{n,\epsilon}$ it also holds that
\begin{equation*}\label{e:inc3}
 V_{n} \Big( \frac{i'}{n} \Big) \notin \Big[ V_{n} \Big( \frac{i'-1}{n} \Big), V_{n} \Big( \frac{i'+1}{n} \Big) \Big],
\end{equation*}
since
$$ \max \bigg\{  V_{n} \Big( \frac{i'-1}{n} \Big),  V_{n} \Big( \frac{i'+1}{n} \Big)  \bigg\} <  V_{n} \Big( \frac{i'}{n} \Big) \qquad \textrm{if} \ \xi_{i'}>0,$$
and
$$\min \bigg\{  V_{n} \Big( \frac{i'-1}{n} \Big),  V_{n} \Big( \frac{i'+1}{n} \Big)  \bigg\} >  V_{n} \Big( \frac{i'}{n} \Big) \qquad \textrm{if} \ \xi_{i'}<0.$$
Therefore
\begin{eqnarray*}
  M \Big( V_{n} \Big( \frac{i'-1}{n} \Big), V_{n} \Big( \frac{i'}{n} \Big), V_{n} \Big( \frac{i'+1}{n} \Big) \Big) & &  \\[0.8em]
   & \hspace*{-20em} =& \ \hspace*{-10em} \min \bigg\{  \Big| V_{n} \Big( \frac{i'}{n} \Big) - V_{n} \Big( \frac{i'-1}{n} \Big) \Big|, \Big| V_{n} \Big( \frac{i'+1}{n} \Big) - V_{n} \Big( \frac{i'}{n} \Big) \Big| \bigg\}.
\end{eqnarray*}
Taking into account (\ref{e:inc1}) and (\ref{e:inc2}) we obtain
\begin{eqnarray*}
  \triangle_{M_{2}}(1/n, V_{n}) & = & \sup_{{\footnotesize \begin{array}{c}
                                0 \leq t \leq 1 \\
                                t_{1/n} \leq t_{1} \leq t_{1/n} +1/2n\\
                                t_{1/n}^{*} -1/2n \leq t_{2} \leq t_{1/n}^{*}
                              \end{array}}
} M(V_{n}(t_{1}), V_{n}(t), V_{n}(t_{2})) \\[0.8em]
   & \geq & M \Big( V_{n} \Big( \frac{i'-1}{n} \Big), V_{n} \Big( \frac{i'}{n} \Big), V_{n} \Big( \frac{i'+1}{n} \Big) \Big) > \frac{3 \epsilon}{8}
\end{eqnarray*}
on the event $A_{n,\epsilon} \setminus B_{n,\epsilon}$. Therefore, since $\triangle_{M_{2}}(\delta, V_{n})$ is nondecreasing in $\delta$, it holds that
 \begin{eqnarray}\label{e:oscM1}
  \nonumber \liminf_{n \to \infty} \Pr (A_{n,\epsilon} \setminus B_{n,\epsilon}) & \leq & \liminf_{n \to \infty}
 \Pr [\triangle_{M_{2}} (1/n, V_{n}) >  3 \epsilon /8]\\[0.4em]
 & \leq &   \lim_{\delta \to 0} \limsup_{n \to \infty}  \Pr [\triangle_{M_{2}} (\delta, V_{n}) >  3 \epsilon/8].
 \end{eqnarray}
  Since $x^{2\alpha}(1-e^{-x^{-\alpha}/2})$ tends to infinity as $x \to \infty$, we can find $\epsilon >0$ such that $\epsilon^{2\alpha}(1-e^{-\epsilon^{-\alpha}/2}) > 8^{\alpha}/4$, i.e.
 $$ 1-e^{-\epsilon^{-\alpha}/2} > \frac{\epsilon^{-2\alpha}}{4 \cdot 8^{-\alpha}}.$$
 For this $\epsilon$, by relations (\ref{e:limAn}) and (\ref{e:limBn}), it holds that
 $$\lim_{n \to \infty} \Pr (A_{n,\epsilon}) > \limsup_{n \to \infty} \Pr (B_{n,\epsilon}),$$
 i.e.
 $$  \liminf_{n \to \infty} \Pr (A_{n,\epsilon} \setminus B_{n,\epsilon}) \geq \lim_{n \to \infty}\Pr (A_{n,\epsilon}) - \limsup_{n \to \infty} \Pr (B_{n,\epsilon}) >0.$$
 Therefore by (\ref{e:oscM1}) we obtain
$$ \lim_{\delta \to 0} \limsup_{n \to \infty}  \Pr [\triangle_{M_{2}} (\delta, V_{n}) >  3\epsilon/8] > 0$$
and relation (\ref{e:osc1}) holds, which means that $V_{n}$ does not converge in distribution in $D[0,1]$ endowed with the $M_{2}$ topology.

Using similar arguments, one can obtain the same conclusion for the moving average process
$$ X'_{t}= Z'_{t}+Z'_{t-1}, \qquad t \in \mathbb{Z},$$
where $Z'_{i}= \xi_{i} - \xi_{i-1}$, $i \in \mathbb{Z}$, but in this case the sequence $(Z'_{i})$ has extremal index $\theta= 1$. Therefore, even when clustering of extreme values do not occur, condition (\ref{e:D'cond}) and functional $M_{2}$ convergence of the corresponding partial sum stochastic process may fail to hold. Here the fact that $V_{n}$ does not converge in $D[0,1]$ with the $M_{2}$ topology can be seen also by the following reasoning.
Observe that $ X'_{i}= \xi_{i}-\xi_{i-2}$, and hence, as $n \to \infty$,
\begin{equation}\label{e:fidi}
V_{n}(\,\cdot\,) = \frac{1}{a_{n}}  \sum_{i=1}^{\floor {n\,\cdot}}X '_{i} = \frac{\xi_{ \floor{n \cdot}}+\xi_{ \floor{n \cdot}-1} - \xi_{0}-\xi_{-1}}{a_{n}} \fidi 0.
\end{equation}
Since, as is known, $\sup_{t \in [0,1]}\xi_{\floor{nt}}/a_{n}$ converges in distribution to a non-zero limit (cf.~Proposition 7.2 in Resnick~\cite{Resnick07}) and the functional $\sup_{t \in [0,1]}$ is continuous in the $M_{2}$ topology (see Skorohod~\cite{Sk57}), the ``fidi" convergence in (\ref{e:fidi}) can not be replaced by convergence in distribution under the $M_{2}$ topology (neither under the other Skorohod's topologies).
\end{exmp}

\section{Infinite order MA processes}
\label{S:InfiniteMA}

For infinite order moving averages the idea is to approximate them by a sequence of finite order moving averages, for which Theorem~\ref{t:FinMA} holds, and to show that the error of approximation is negligible in the limit. In the case $\alpha \in (0,1)$ one can follow directly the lines in the proof of Theorem 3.1 in Krizmani\'{c}~\cite{Kr19} to obtain the functional convergence of the corresponding partial sum stochastic processes. In the case $\alpha \in [1,2)$ the arguments from the proof of Theorem 3.1 in Krizmani\'{c}~\cite{Kr19} can not be applied to our setting, since due to the dependence in the sequence $(Z_{i})$, certain sequences constructed from $(C_{i})$ and $(Z_{i})$ are no longer martingales and martingale-difference sequences, which was crucial in obtaining functional convergence for infinite order moving averages with i.i.d.~innovations. The idea in this case is to use the arguments from Lemma 2 in Tyran-Kami\'{n}ska~\cite{Ty10b} to show that functional convergence of the partial sum stochastic processes still holds. More precisely, we have the following result.

\begin{thm}\label{t:InfMA}
Let $(X_{i})$ be a moving average process defined by
$$ X_{i} = \sum_{j=0}^{\infty}C_{j}Z_{i-j}, \qquad i \in \mathbb{Z},$$
 where $(Z_{i})_{i \in \mathbb{Z}}$ is a strictly stationary and strongly mixing sequence of regularly varying random variables with index $\alpha \in (0,2)$, such that conditions $(\ref{e:D'cond})$, $(\ref{e:oceknula})$ and $(\ref{e:sim})$ hold, and $(C_{i})_{i \geq 0}$ is a sequence of random variables, independent of $(Z_{i})$, satisfying conditions $(\ref{e:momcond})$ and $(\ref{eq:InfiniteMAcond})$.
If $\alpha \in (0,1)$ suppose further
\begin{equation}\label{e:mod1}
 \sum_{i=0}^{\infty}\mathrm{E}|C_{i}|^{\gamma} < \infty \qquad \textrm{for some} \ \gamma \in (\alpha, 1),
\end{equation}
while if $\alpha \in [1,2)$ suppose condition $(\ref{e:vsvcond})$ holds,
\begin{equation}\label{eq:infmaTK}
\limsup_{n \to \infty} \sup_{j \geq 0} \mathrm{E} \bigg[ \max_{1 \leq l \leq n} \bigg| \frac{1}{a_{n}} \sum_{i=1}^{l}Z_{i-j} 1_{\{ |Z_{i-j}| \leq a_{n} \}} \bigg|^{r}\bigg] < \infty \qquad \textrm{for some} \ r \geq 1,
\end{equation}
and
\begin{equation}\label{eq:infmaTK3}
\sum_{j=0}^{\infty} \mathrm{E}|C_{j}| < \infty.
\end{equation}
  Then
$$ V_{n}(\,\cdot\,) \dto \widetilde{C} V(\,\cdot\,), \qquad n \to \infty,$$
in $D[0,1]$ endowed with the $M_{2}$ topology, where $V$ is an $\alpha$--stable L\'{e}vy process with characteristic triple $(0, \mu, b)$, with $\mu$ as in $(\ref{eq:mu})$ and
$$ b = \left\{ \begin{array}{cc}
                                   0, & \quad \alpha = 1,\\[0.4em]
                                   (p-r)\frac{\alpha}{1-\alpha}, & \quad \alpha \in (0,1) \cup (1,2),
                                 \end{array}\right.$$
   and $\widetilde{C}$ is a random variable, independent of $V$, such that $\widetilde{C} \eind \sum_{i=0}^{\infty}C_{i}$.
\end{thm}
\begin{proof}
Take $q \in \mathbb{N}$ and define
$$ X_{i}^{q} = \sum_{j=0}^{q-1}C_{j}Z_{i-j} + C'_{q} Z_{i-q}, \qquad i \in \mathbb{Z},$$
where $C'_{q}= \sum_{i=q}^{\infty}C_{i}$,
and
$$ V_{n, q}(t) = \sum_{i=1}^{\floor{nt}} \frac{X_{i}^{q}}{a_{n}}, \qquad t \in [0,1].$$
The coefficients $C_{0}, \ldots, C_{q-1}, C'_{q}$ satisfy condition (\ref{eq:FiniteMAcond}), and hence an application of Theorem~\ref{t:FinMA} to a finite order moving average process $(X_{i}^{q})_{i}$ yields that, as $n \to \infty$,
$$V_{n, q}(\,\cdot\,) \dto \widetilde{C} V(\,\cdot\,)$$
in $(D[0,1], d_{M_{2}})$, where $V$ is an $\alpha$--stable L\'{e}vy process with characteristic triple as in Theorem~\ref{t:FinMA}
and $\widetilde{C}$ is a random variable, independent of $V$, such that $\widetilde{C} \eind \sum_{i=0}^{\infty}C_{i}$.

In the case $\alpha \in (0,1)$ by repeating the arguments from the proof of Theorem 3.1 in Krizmani\'{c}~\cite{Kr19} we obtain
\begin{equation}\label{e:infSlutsky}
 \lim_{q \to \infty} \limsup_{n \to \infty} \Pr[d_{M_{2}}(V_{n, q}, V_{n})> \epsilon]=0,
\end{equation}
for every $\epsilon >0$. Therefore, by a generalization of Slutsky's theorem (see for instance Theorem 3.5 in Resnick~\cite{Resnick07}) it follows that $V_{n}(\,\cdot\,) \dto \widetilde{C} V(\,\cdot\,)$, as $n \to \infty$, in $(D[0,1], d_{M_{2}})$.

Assume now $\alpha \in [1,2)$. We will use the arguments from the proof of Lemma 2 in Tyran-Kami\'{n}ska~\cite{Ty10b} adapted to linear processes with random coefficients instead of deterministic. Define $Z_{n,j} = a_{n}^{-1} Z_{j} 1_{\{ |Z_{j}| \leq a_{n} \}}$ for $j \in \mathbb{Z}$ and $n \in \mathbb{N}$,
$$ \widetilde{C}_{j} = \left\{ \begin{array}{cc}
                                   C_{j}, & \quad \textrm{if} \ j > q,\\[0.4em]
                                   C_{q}-C'_{q}, & \quad \textrm{if} \ j=q,
                                 \end{array}\right.$$
and note that
\begin{eqnarray*}
\nonumber  V_{n}(t) - V_{n,q}(t) & = & \sum_{i=1}^{\floor{nt}} \frac{1}{a_{n}} \bigg( \sum_{j=q}^{\infty}C_{j}Z_{i-j} - C'_{q}Z_{i-q} \bigg) \\[0.4em]
& = &  \sum_{i=1}^{\floor{nt}}  \sum_{j=q}^{\infty} \widetilde{C}_{j}Z_{n,i-j} + \sum_{i=1}^{\floor{nt}}  \sum_{j=q}^{\infty} \frac{\widetilde{C}_{j}Z_{i-j}}{a_{n}} 1_{\{ |Z_{i-j}| > a_{n} \}}.
\end{eqnarray*}
Since the Skorohod $M_{2}$ metric on $D[0,1]$ is bounded above by the uniform metric on $D[0,1]$, we have
\begin{eqnarray}\label{eq:I1I2}
\nonumber   \Pr[d_{M_{2}}(V_{n, q}, V_{n})> \epsilon] & \leq & \Pr \bigg( \sup_{0 \leq t \leq 1} |V_{n}(t) - V_{n,q}(t)| > \epsilon \bigg)\\[0.4em]
\nonumber & \hspace*{-18em} \leq & \hspace*{-9em} \Pr \bigg( \max_{1 \leq l \leq n} \bigg| \sum_{i=1}^{l} \sum_{j=q}^{\infty} \widetilde{C}_{j}Z_{n,i-j} \bigg| > \frac{\epsilon}{2} \bigg) + \Pr \bigg( \max_{1 \leq l \leq n} \bigg| \sum_{i=1}^{l} \sum_{j=q}^{\infty} \frac{\widetilde{C}_{j}Z_{i-j}}{a_{n}} 1_{\{ |Z_{i-j}|>a_{n} \}} \bigg| > \frac{\epsilon}{2} \bigg)\\[0.4em]
& \hspace*{-18em} =: & \hspace*{-9em} I_{1} + I_{2}.
\end{eqnarray}
By H\"{o}lder's inequality we have
$$ \bigg( \sum_{j=q}^{\infty} |\widetilde{C}_{j}| \cdot \bigg| \sum_{i=1}^{l}Z_{n,i-j} \bigg| \bigg)^{r} \leq \bigg( \sum_{j=q}^{\infty}|\widetilde{C}_{j}| \bigg)^{r-1} \sum_{j=q}^{\infty}|\widetilde{C}_{j}| \cdot \bigg| \sum_{i=1}^{l}Z_{n,i-j} \bigg|^{r},$$
with $r$ as in (\ref{eq:infmaTK}), and therefore using Markov's inequality we obtain
\begin{eqnarray*}
\nonumber I_{1} & \leq & \Pr \bigg( \sum_{j=q}^{\infty}|\widetilde{C}_{j}| >1 \bigg) + \Pr \bigg( \max_{1 \leq l \leq n} \bigg| \sum_{i=1}^{l} \sum_{j=q}^{\infty} \widetilde{C}_{j}Z_{n,i-j} \bigg| > \frac{\epsilon}{2},\,\sum_{j=q}^{\infty}|\widetilde{C}_{j}| \leq 1 \bigg)\\[0.4em]
& \leq & \mathrm{E} \bigg( \sum_{j=q}^{\infty}|\widetilde{C}_{j}| \bigg) + \Pr \bigg( \max_{1 \leq l \leq n} \bigg| \sum_{i=1}^{l} \sum_{j=q}^{\infty} \widetilde{C}_{j}Z_{n,i-j} \bigg|^{r} > \Big(\frac{\epsilon}{2}\Big)^{r},\,\sum_{j=q}^{\infty}|\widetilde{C}_{j}| \leq 1 \bigg)\\[0.4em]
& \leq & \mathrm{E} \bigg( \sum_{j=q}^{\infty}|\widetilde{C}_{j}| \bigg) + \Pr \bigg( \max_{1 \leq l \leq n} \bigg( \sum_{j=q}^{\infty}|\widetilde{C}_{j}| \bigg)^{r-1} \sum_{j=q}^{\infty}|\widetilde{C}_{j}| \cdot \bigg| \sum_{i=1}^{l}Z_{n,i-j} \bigg|^{r}  > \Big(\frac{\epsilon}{2}\Big)^{r},\,\sum_{j=q}^{\infty}|\widetilde{C}_{j}| \leq 1 \bigg)\\[0.4em]
& \leq & \mathrm{E} \bigg( \sum_{j=q}^{\infty}|\widetilde{C}_{j}| \bigg) + \Pr \bigg( \max_{1 \leq l \leq n}  \sum_{j=q}^{\infty}|\widetilde{C}_{j}| \cdot \bigg| \sum_{i=1}^{l}Z_{n,i-j} \bigg|^{r}  > \Big(\frac{\epsilon}{2}\Big)^{r},\,\sum_{j=q}^{\infty}|\widetilde{C}_{j}| \leq 1 \bigg).
\end{eqnarray*}
Now, using again Markov's inequality and the fact that the sequence $(C_{i})_{i \geq 0}$ is independent of $(Z_{i})$ we obtain
\begin{eqnarray*}
I_{1} & \leq &  \mathrm{E} \bigg( \sum_{j=q}^{\infty}|\widetilde{C}_{j}| \bigg) + \frac{2^{r}}{\epsilon^{r}}\mathrm{E} \bigg[ \sum_{j=q}^{\infty}|\widetilde{C}_{j}| \cdot \max_{1 \leq l \leq n} \bigg| \sum_{i=1}^{l}Z_{n,i-j} \bigg|^{r} \bigg]\\[0.4em]
 & \leq & \mathrm{E} \bigg( \sum_{j=q}^{\infty}|\widetilde{C}_{j}| \bigg) + \frac{2^{r}}{\epsilon^{r}} \sum_{j=q}^{\infty} \mathrm{E} |\widetilde{C}_{j}| \cdot \mathrm{E} \bigg( \max_{1 \leq l \leq n} \bigg| \sum_{i=1}^{l}Z_{n,i-j} \bigg|^{r} \bigg)\\[0.4em]
 & \leq & \mathrm{E} \bigg( \sum_{j=q}^{\infty}|\widetilde{C}_{j}| \bigg) + \frac{2^{r}}{\epsilon^{r}} \sum_{j=q}^{\infty} \mathrm{E} |\widetilde{C}_{j}| \cdot \sup_{k \geq q} \mathrm{E} \bigg( \max_{1 \leq l \leq n} \bigg| \sum_{i=1}^{l}Z_{n,i-k} \bigg|^{r} \bigg).
\end{eqnarray*}
Noting that $ \sum_{j=q}^{\infty}|\widetilde{C}_{j}| \leq 2 \sum_{j=q}^{\infty}|C_{j}|$, from condition (\ref{eq:infmaTK}) we now conclude that there exists a positive constant $D_{1}$ such that for all $q \in \mathbb{N}$ it holds that
\begin{equation}\label{eq:I1}
 \limsup_{n \to \infty} I_{1} \leq D_{1} \sum_{j=q}^{\infty}\mathrm{E} |C_{j}|.
\end{equation}

In order to estimate $I_{2}$ we consider separately the cases $\alpha \in (1,2)$ and $\alpha=1$. Assume first $\alpha \in (1,2)$. Applying Markov's inequality, the fact that the sequence $(C_{i})_{i \geq 0}$ is independent of $(Z_{i})$ and the stationarity of the sequence $(Z_{i})$ we obtain
\begin{eqnarray}\label{eq:alpha1}
\nonumber I_{2} & \leq &   \Pr \bigg(  \sum_{i=1}^{n} \bigg| \sum_{j=q}^{\infty} \frac{\widetilde{C}_{j}Z_{i-j}}{a_{n}} 1_{\{ |Z_{i-j}|>a_{n} \}} \bigg| > \frac{\epsilon}{2} \bigg)\\[0.4em]
 \nonumber & \leq & \frac{2}{\epsilon a_{n}}  \mathrm{E} \bigg(  \sum_{i=1}^{n} \bigg| \sum_{j=q}^{\infty} \widetilde{C}_{j}Z_{i-j} 1_{\{ |Z_{i-j}|>a_{n} \}} \bigg|  \bigg)\\[0.4em]
 & \leq & \frac{2n}{\epsilon a_{n}}  \sum_{j=q}^{\infty} \mathrm{E} |\widetilde{C}_{j}| \cdot \mathrm{E} \Big( |Z_{1}| 1 _{\{ |Z_{1}|>a_{n} \}} \Big)
\end{eqnarray}
By Karamata's theorem, as $n \to \infty$,
$$ \frac{n}{ a_{n}} \mathrm{E} \Big( |Z_{1}| 1 _{\{ |Z_{1}|>a_{n} \}} \Big) \to \frac{\alpha}{\alpha-1},$$
and hence from (\ref{eq:alpha1}) we conclude that there exists a positive constant $D_{2}$ such that
\begin{equation}\label{eq:I2a}
 \limsup_{n \to \infty} I_{2} \leq D_{2} \sum_{j=q}^{\infty} \mathrm{E} |C_{j}|.
\end{equation}
Now assume $\alpha =1$. Markov's inequality implies
$$ I_{2} \leq  \frac{2^{\delta}}{\epsilon^{\delta} a_{n}^{\delta}}  \mathrm{E} \bigg(  \sum_{i=1}^{n} \bigg| \sum_{j=q}^{\infty} \widetilde{C}_{j}Z_{i-j} 1_{\{ |Z_{i-j}|>a_{n} \}} \bigg| \bigg)^{\delta},$$
with $\delta$ as in relation (\ref{e:momcond}). Since $\delta < 1$, a double application of the triangle inequality $|\sum_{i=1}^{\infty}a_{i}|^{s} \leq \sum_{i=1}^{\infty}|a_{i}|^{s}$ with $s \in (0,1]$ yields
\begin{eqnarray*}
I_{2} & \leq & \frac{2^{\delta}}{\epsilon^{\delta} a_{n}^{\delta}} \sum_{i=1}^{n} \mathrm{E} \bigg( \bigg| \sum_{j=q}^{\infty} \widetilde{C}_{j}Z_{i-j} 1_{\{ |Z_{i-j}|>a_{n} \}} \bigg|^{\delta} \bigg)\\[0.4em]
      & \leq & \frac{2^{\delta}}{\epsilon^{\delta} a_{n}^{\delta}} \sum_{i=1}^{n} \sum_{j=q}^{\infty} \mathrm{E}  \bigg( \bigg| \widetilde{C}_{j}Z_{i-j} 1_{\{ |Z_{i-j}|>a_{n} \}} \bigg|^{\delta} \bigg).
\end{eqnarray*}
Using again the fact that $(C_{i})$ is independent of $(Z_{i})$ and the stationarity of $(Z_{i})$ we obtain
$$ I_{2} \leq \frac{2^{\delta} n}{\epsilon^{\delta} a_{n}^{\delta}} \mathrm{E} \Big( |Z_{1}|^{\delta} 1 _{\{ |Z_{1}|>a_{n} \}} \Big) \sum_{j=q}^{\infty} \mathrm{E} |\widetilde{C}_{j}|^{\delta}.$$
From this, since by Karamata's theorem
$$ \lim_{n \to \infty} \frac{n}{a_{n}^{\delta}} \mathrm{E} \Big( |Z_{1}|^{\delta} 1 _{\{ |Z_{1}|>a_{n} \}} \Big) = \frac{1}{1-\delta},$$
it follows that there exists a positive constant $D_{3}$ such that
\begin{equation*}\label{eq:I2b}
\limsup_{n \to \infty} I_{2} \leq D_{3} \sum_{j=q}^{\infty} \mathrm{E} |C_{j}|^{\delta}.
\end{equation*}
This together with (\ref{eq:I1I2}), (\ref{eq:I1}) and (\ref{eq:I2a}) shows that
$$ \limsup_{n \to \infty}\Pr[d_{M_{2}}(V_{n, q}, V_{n})> \epsilon] \leq D_{1} \sum_{j=q}^{\infty} \mathrm{E}|C_{j}| + (D_{2}+D_{3}) \sum_{j=q}^{\infty} \mathrm{E}|C_{j}|^{s},$$
 where
 $$ s = \left\{ \begin{array}{cc}
                                   \delta, & \quad \textrm{if} \ \alpha = 1,\\[0.4em]
                                   1, & \quad \textrm{if} \ \alpha \in (1,2).
                                 \end{array}\right.$$
Now, the dominated convergence theorem and conditions (\ref{e:momcond}) and (\ref{eq:infmaTK3}) yield (\ref{e:infSlutsky}). Therefore we again obtain $V_{n}(\,\cdot\,) \dto \widetilde{C} V(\,\cdot\,)$ in $(D[0,1], d_{M_{2}})$.

\end{proof}

\begin{rem}
If the sequence $(Z_{i})$ is an i.i.d.~or $\rho$--mixing sequence with $\sum_{i=1}^{\infty}\rho(2^{i}) < \infty$, where
$$ \rho(n) = \sup \{ | \textrm{corr}(f,g) | : f \in \mathrm{L}^{2}(\mathcal{F}_{1}^{k}), g \in \mathrm{L}^{2}(\mathcal{F}_{k+n}^{\infty}), k=1,2,\ldots \},$$
then it is known that condition (\ref{eq:infmaTK}) holds with $r=2$, see Tyran-Kami\'{n}ska~\cite{Ty10b}.

In the case when the sequence $(C_{j})$ is deterministic, conditions (\ref{e:mod1}) and (\ref{eq:infmaTK3}) can be dropped since they are implied by (\ref{e:momcond}). To see this note that by condition (\ref{e:momcond}) it holds that $|C_{j}|^{\delta} < 1$ for large $j$. Now since $|C_{j}|^{\delta x}$ is decreasing in $x$, it follows that for large $j$
$$ |C_{j}|^{\gamma} = (|C_{j}|^{\delta})^{\gamma/\delta} \leq |C_{j}|^{\delta},$$
and similarly $|C_{j}| \leq |C_{j}|^{\delta}$. This suffice to conclude that (\ref{e:mod1}) and (\ref{eq:infmaTK3}) hold. In general this does not hold when the coefficients are random (see for an example Krizmani\'{c}~\cite{Kr19}).
\end{rem}

\section*{Acknowledgment}
 This work has been supported in part by University of Rijeka research grants uniri-prirod-18-9 and uniri-pr-prirod-19-16 and by Croatian Science Foundation under the project IP-2019-04-1239.


\begin{thebibliography}{00}




\bibitem{At83}
A. Astrauskas, Limit theorems for sums of linearly generated random variables, {\it Lith. Math. J.} {\bf 23} (1983), 127--134.


\bibitem{AvTa92}
F. Avram and M. Taqqu, Weak convergence of sums of moving
averages in the $\alpha$--stable domain of attraction, {\it Ann.
Probab.} {\bf 20} (1992), 483--503.

\bibitem{BJL16}
R. Balan, A. Jakubowski and S. Louhichi, Functional Convergence of Linear Processes with Heavy-Tailed Innovations, \textit{Journal of Theoretical Probabability} {\bf 29} (2016), 491--526.

\bibitem{BKS}
B. Basrak, D. Krizmani\'{c} and J. Segers, A functional limit theorem for partial sums
of dependent random variables with infinite variance, {\it Ann. Probab.} {\bf 40} (2012), 2008--2033.

\bibitem{BaKr}
B. Basrak and D. Krizmani\'{c}, A limit theorem for moving averages in the $\alpha$--stable domain of attraction, {\it Stochastic Process. Appl.} {\bf 124} (2014), 1070--1083.

\bibitem{Bi68}
P. Billingsley, {\it Convergence of Probability Measures}, John Wiley $\&$ Sons, Ney York, 1968.



\bibitem{Cl83}
D. Cline, Iinfinite series of random variables with regularly varying tails, Technical Report No.~83-24 (1983), Institute of Applied Mathematics and Statistics, University of British Columbia.

\bibitem{Da83}
R. A. Davis, Stable limits for partial sums of dependent random variables, {\it Ann. Probab.} {\bf 11} (1983), 262--269.

\bibitem{DaMi09}
R. A. Davis and T. Mikosch, Extremes of Stochastic Volatility Models. In: Handbook of Financial Time Series(2009), Springer, 355--364.



\bibitem{DuRe78}
R. Durrett and S. I. Resnick, Functional limit theorems for dependent variables,
{\it Ann. Probab.} {\bf 6} (1978), 829--846.

\bibitem{EmKlMi97}
P. Embrechts, C. Kl\"uppel\-berg and T. Mikosch,
\textit{Modelling Extremal Events.}, Springer-Verlag, Berlin, 1997.


\bibitem{FeCa08}
M. Ferreira and L. Canto e Castro, Tail and dependence behavior of levels that persist for a fixed period of time, \textit{Extremes} {\bf 11} (2008), 113-133.

\bibitem{HHL88}
T. Hsing, J. H\"{u}sler and M. R. Leadbetter, On the exceedance point process for a stationary sequence, \textit{Probability Theory and Related Fields} {\bf 78} (1988), 97--112.

\bibitem{HuSa08}
H. Hult and G. Samorodnitsky, Tail probabilities for infinite series of regularly varying random vectors, {\it Bernoulli} {\bf 14} (2008), 838--864.

\bibitem{Ja97}
A. Jakubowski, A non-Skorohod topology on the Skorohod space, \textit{Electronic Journal of Probability} {\bf 2} (1997), paper no.~4.

\bibitem{Ka97}
O. Kallenberg, {\it Foundations of Modern Probability}, Springer-Verlag, New York, 1997.


\bibitem{Kr19}
D. Krizmani\'{c}, Functional convergence for moving averages with heavy tails and random coefficients, {\it ALEA Lat. Am. J. Probab. Math. Stat.} {\bf 16} (2019), 729--757.

\bibitem{Ku06}
R. Kulik, Limit Theorems for Moving Averages with Random Coefficients and Heavy-Tailed Noise, {\it J. Appl. Prob.} {\bf 43} (2006), 245--256.

\bibitem{LLR83}
M. R. Leadbetter, G. Lindgren and H. Rootz\'{e}n, \textit{Extremes and Related Properties of Random Sequences and Processes}, Springer, New York, 1983.

\bibitem{LeRo88}
M. R. Leadbetter and H. Rootz\'{e}n, Extremal theory for
stochastic processes, \textit{Annals of Probability} {\bf 16} (1988), 431--478.

\bibitem{Lindskog04}
F. Lindskog, Multivariate Extremes and Regular Variation for Stochastic Processes, Ph.D. thesis, Swiss Federal Institute of Technology, Zurich, 2004.

\bibitem{LR11}
S. Louhichi and E. Rio, Functional convergence to stable L\'{e}vy motions for iterated random Lipschitz mappings, \textit{Electronic Journal of Probability} {\bf 16} (2011), paper no.~89.


\bibitem{Resnick07}
S. I. Resnick, {\it Heavy-Tail Phenomena: Probabilistic nad
Statistical Modeling}, Springer Science+Business Media LLC, New
York, 2007.

\bibitem{Sk56}
A. V. Skorohod, Limit theorems for stochastic processes, \textit{Theory of Probability and Its Applications} {\bf 1} (1956), 261--290.

\bibitem{Sk57}
A. V. Skorohod, Limit theorems for stochastic processes with independent increments, \textit{Theory of Probability and Its Applications} {\bf 2} (1957), 138--171.

\bibitem{Ty10a}
M. Tyran-Kami\'{n}ska, Convergence to L\'{e}vy stable processes under some weak dependence
conditions, {\it Stochastic Process. Appl.} {\bf 120} (2010), 1629--1650.

\bibitem{Ty10b}
M. Tyran-Kami\'{n}ska, Functional limit theorems for linear processes in the domain of attraction of
stable laws, {\it Stat. Probab. Lett.} {\bf 80} (2010), 975--981.


\bibitem{Whitt02}
W. Whitt, {\it Stochastic-Process Limits}, Springer-Verlag
LLC, New York, 2002.


\end{thebibliography}

\end{document}